\numberwithin{equation}{section}
\newtheorem{lm}{Lemma}
\newtheorem{thrm}{Theorem}
\newtheorem{cor}{Corollary}
\newtheorem{defn}{Definition}
\newcommand{\domain}{\Omega}
\newcommand{\fb}{F\!B}
\newcommand{\ep}{\varepsilon}
\begin{document}
\author{Thomas Backing\thanks{Partially supported by NSF grant DMS-1101246}\\ Purdue University}
\date{}

\title{Regularity of Solutions to a Parabolic Free Boundary Problem with Variable Coefficients}

\maketitle

\section{Introduction}
Free boundary problems occur in many science and engineering problems when a law changes discontinuously as it crosses from one region to another. Typical problems require the solution to satisfy different sets of conditions on its positivity and negativity sets. Since these regions depend on the solution $u$ itself, the interface between them, called the free boundary, is not known at the outset.

In this paper, we study the regularity of viscosity solutions to a family of parabolic free boundary problems of the form
\begin{equation}\label{FBP_Statement}
\left\lbrace
\begin{aligned}
&\mathcal{L}u -u_t =0 \quad \text{in } \;(\{u>0\}\cup\{u\leq 0\}^\circ)\subset \domain \\
 &G(u^+_\nu,u^-_\nu)=1 \quad \text{along } \;\partial\{u>0\} \subset \domain.
\end{aligned}
\right.
\
\end{equation}

Here $\mathcal{L}$ is a non-divergence form uniformly elliptic operator and $u^\pm_\nu$ denote, respectively, the inner normal derivative relative to the sets $\{u>0\}$ and $\{u\leq 0\}^\circ$.

The main goal of this paper is to prove that, under suitable conditions on $G$ (stated precisely in the next section), viscosity solutions to~\eqref{FBP_Statement} which possess a Lipschitz free boundary and satisfy a non-degeneracy condition are Lipschitz continuous. This is the optimal regularity for this problem.

Motivating examples of the free boundary condition $G(u^+_\nu,u^-_\nu) $ include $u^+_\nu =1$  or $(u^+_\nu)^2-(u^-_\nu)^2 =2M>0$, both of which arise from a singular perturbation problem used to model a problem from combustion theory. This problem consists of studying the limit as $\ep \rightarrow 0$ of weak solutions to
\[
\Delta u^\ep -u^\ep_t = \beta_\ep(u^\ep)
\]
where $\beta_\ep(s) =\frac{1}{\ep}\beta(s/\ep)$ for a Lipschitz function $\beta$ supported on $[0,1]$ with $\beta_\ep$ satisfying for some positive constants $C$ and $M$
\[
0\leq \beta_\ep \leq \frac{C}{\ep}\chi_{(0,\ep)} \quad \text{and} \quad \int_0^\ep \beta_\ep(s)\, \mathrm{d}s =M.
\]
The one-phase version of this problem (i.e. $u^\ep \geq 0$) was studied in \cite{CV}.  It was shown there that the limit function $u$ satisfies $u^+_\nu=1$ along the boundary of its positivity set. For two phase problem (studied in \cite{CLW1}, \cite{CLW2}) the boundary condition for the limit $u$ is $(u^+_\nu)^2-(u^-_\nu)^2 =2M>0$. In all of these works the boundary condition was shown to hold in a weak sense, with pointwise equality only at regular points on the zero set. This two phase boundary condition is the prototype for the function $G$.

An elliptic version of~\eqref{FBP_Statement} with the Laplacian was studied in \cite{C1}, \cite{C2}. It is in these works that the main ideas used in this paper, such as monotonicity cones, viscosity solutions to~\eqref{FBP_Statement}, and `sup-convolutions' were first developed. Similar methods were later applied to the study of the Stefan problem for the heat equation in \cite{ACS1}, \cite{ACS2}, \cite{ACS3}.  All of these works, including the singular perturbation problem, involved only the case where $\mathcal{L} =\Delta$, $\Delta$ denoting the Laplacian. In [F] these results were adapted to the study of the constant coefficient version of~\eqref{FBP_Statement}.

When $\mathcal{L}=\Delta$, whether in the elliptic or parabolic case, extensive use is made of the fact that directional derivatives of solutions to a constant coefficient linear PDE are themselves solutions to the same PDE. In particular, tools like the Harnack Inequality can be applied to the directional derivatives. At variance with the case when $\mathcal{L}=\Delta$, directional derivatives of solutions to the operator $\mathcal{L}-\partial_t$ are not themselves solutions. This prevents a straightforward extension of the constant coefficient results to this variable coefficient case.
Indeed, the only results extending these methods to a variable coefficient parabolic problem are the recent papers \cite{FS1}, \cite{FS2} for the Stefan problem.

The strategy of this work is to begin by deducing that a cone of $\ep$-monotonicity (defined below) exists for solutions of~\eqref{FBP_Statement}. This is due to $u$ being $\mathcal{L}$-caloric on a Lipschitz domain and vanishing on a portion of its boundary. We then construct an iteration which simultaneously decreases both $\ep$ and the aperture of the $\ep$-monotonicity cone opening. By carefully balancing decrease in $\ep$, which is desirable, with the decrease in the cone opening, which is not, we show that there does indeed exist a cone of full monotonicity  up to the free boundary of $u$. Using this information, we deduce control of the time derivative by the spacial gradient $|\nabla u|$, then proceed to show the boundedness of $|\nabla u|$, thereby establishing Lipschitz continuity of the solution.

The structure of this work is as follows: Section 2 states precisely the problem under consideration as well as our main result. Section 3 collects the main tools and known results used the analysis of this problem. Section 4 contains our results on the asymptotic behavior of solutions near the free boundary. These results are used in Section 5 to prove that a space-time cone of monotonicity exists up to the free boundary for $u$. In Section 6, we use this to prove the Lipschitz regularity of $u$.

\section{Definitions and Statement of Results}

We will denote the positivity set of $u$ by $\domain^+$, i.e. $\domain^+=\{x\in\Omega\ |u(x)>0)\}$; likewise the negative set is denoted by $\domain^-$. Occasionally we will write $\domain^\pm(u)$ to emphasize the dependence of these domains on the function $u$. The set $\partial\{u>0\}$ is the free boundary and will be denoted by $\fb(u)$ or just $\fb$.

We will denote by $C_{R,T}(x_0,t_0)$ the cylinder
\[
B'_R(x_0)\times (t_0-T,t_0+T).
\]
If the center of the cylinder is the origin we will simply write $C_{R,T}$ and if $R=T$ we will write $C_R$.

 The operator
\[
\mathcal{L} =\sum_{i,j} a_{ij}(x,t)D_{ij}
\] has Holder continuous coefficients $a_{ij} \in C^{0,\alpha}(\domain)$, $0<\alpha\leq 1$ and there exists $\lambda, \Lambda >0$ such that
\[
\lambda|\xi|^2 \leq \sum a_{ij}(x,t)\xi_i \xi_j \leq \Lambda |\xi|^2
\] for all $(x,t) \in \domain$. Denoting by $A(x,t)$ the matrix $[a_{ij}(x,t)]$,  we assume $A(0,0) =[\delta_{ij}]$, i.e. the identity.

On $G(a,b)$ we will require:
\begin{enumerate}
\item $G$  Lipschitz with constant $L_G$ in both variables.
\item $G(a_1,b) -G(a_2,b) >c^*(a_1-a_2)^p$ if $a_1>a_2$ (strictly increasing in the first variable)
\item $G(a,b_1)-G(a,b_2) < -c^*(b_1-b_2)^p$ if $b_1>b_2$ (strictly decreasing in the second variable)
\end{enumerate}
The $p$ appearing here is some positive power.
\begin{defn}(Classical Subsolution/Supersolution)
We say $v(x,t)$ is a classical subsolution (supersolution) to~\eqref{FBP_Statement} if $v \in C^1(\overline{\domain^+(v)})\cup C^1(\overline{\domain^-(v)})$, $\mathcal{L}v-v_t\geq 0$ $(\mathcal{L}v-v_t\leq 0)$ in $\domain^\pm(v)$ and
\[
G(v_\nu^+,v_\nu^-) \geq 1 \;\;(G(v_\nu^+,v_\nu^-) \leq 1), \quad \text{where }\nu= \frac{\nabla v^+}{|\nabla v^+|}\]
A strict subsolution (supersolution) satisfies the above with strict inequalities.
\end{defn}

\begin{defn}(Viscosity Subolutions/Supersolutions)
A continuous function $v(x,t)$ is a viscosity subsolution (supersolution) to~\eqref{FBP_Statement} in $\domain$ if for every space-time cylinder $Q= B_r' \times (-T,T) \Subset \domain$ and for every classical supersolution (subsolution) $w$ in $Q$, the inequality $v\leq w$ ($v\geq w)$ on $\partial_pQ$ implies that $v\leq w$ ($v\geq w)$. Additionally, if $w$ is a strict classical supersolution (subsolution), then $v<w$ $(v>w)$ on $\partial_p Q$ implies $v<w$ $(v>w)$ inside $Q$.
\end{defn}

We now turn to the hypotheses on the free boundary of $u$. Our main result will require that this free boundary is Lipschitz, but will we will also require a non-degeneracy condition to hold at regular points. We first define such points.

\begin{defn}(Regular Points) A point $(x_0,t_0)$ on the free boundary of $u$ is a right regular point if there exists a space-time ball $B_R \subset \domain^+$ such that $B_R \cap \partial \{u>0\} =\{(x_0,t_0)\}$.

 A point $(x_0,t_0)$ on the free boundary of $u$ is a left regular point if there exists a space-time ball $B_R \subset \domain^-$ such that $B_R \cap \partial \{u\leq 0\} =\{(x_0,t_0)\}$.

\end{defn}

We now state precisely our assumptions:\\
\\
(H1)\emph{ The free boundary  $FB(u)$ is the graph of a Lipschitz function $f$, that is, $FB(u)= \{(x',x_n,t)| f(x',t)=x_n\}$ with $f(0,0) =0$. We will denote by $L$ and $L_0$ the Lipschitz constant of $f$ in space and time respectively.}\\
(H2) \emph{$u$ satisfies the following non-degeneracy condition: There exists a $m>0$ such that if $(x_0,t_0)$ is a right regular point for $u$ then}
\begin{equation}
\frac{1}{|B'_r(x_0)|}\int_{B'_r(x_0)} u^+ dx \geq mr.
\end{equation}

The main result of this paper is the following theorem.
\begin{thrm} Let $u$ be a viscosity solution to~\eqref{FBP_Statement} in $C_1$ satisfying (H1) and (H2). Then in $C_{1/2}$, $u$ possesses a cone of monotonicity in both space and time and is Lipschitz continuous.
\end{thrm}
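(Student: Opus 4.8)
The plan is to follow the scheme announced in the introduction: extract a cone of $\ep$-monotonicity from the Lipschitz free boundary, run an iteration that drives the monotonicity error $\ep$ to zero at a controlled cost in the aperture of the cone, and then convert the resulting genuine monotonicity into a Lipschitz bound. \emph{Step 1 (initial $\ep$-monotonicity).} By (H1) the sets $\domain^+(u)$ and $\domain^-(u)$ are, inside $C_{3/4}$, Lipschitz domains in space-time with constants governed by $L$ and $L_0$, and $u$ is $\mathcal L$-caloric in each while vanishing on $\fb(u)$. Combining the boundary (Carleson-type) comparison estimates for $\mathcal L-\partial_t$ on Lipschitz domains with the interior Harnack inequality, the non-degeneracy (H2) on the positive side, and the automatic linear lower bound for an $\mathcal L$-caloric function vanishing on a Lipschitz boundary on the negative side, I would produce a space-time direction $\mathbf e_0=(e_0,\tau_0)$, an aperture $\theta_0=\theta_0(n,\lambda,\Lambda,\alpha,L,L_0)\in(0,\pi/2)$, and an $\ep_0>0$ such that $u$ is $\ep_0$-monotone along every direction of the cone $\Gamma(\theta_0,\mathbf e_0)$ (directions making angle less than $\theta_0$ with $\mathbf e_0$). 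Here the normalization $A(0,0)=I$ together with $a_{ij}\in C^{0,\alpha}$, which makes $\mathcal L-\partial_t$ a small perturbation of the heat operator at small scales, is what lets the usual harmonic/caloric arguments be transplanted; this is the content of Section 4.

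\emph{Step 2 (the improvement lemma and the iteration).} The engine is a quantitative gain: there is $\gamma=\gamma(\mathrm{data})\in(0,1)$ and a threshold such that if $u$ is $\ep$-monotone in $\Gamma(\theta,\mathbf e)$ in a cylinder $C_r(X_0)$ centered at a free boundary point, with $\ep/r$ small, then in a slightly smaller concentric cylinder $u$ is $\gamma\ep$-monotone in a cone $\Gamma(\theta',\mathbf e')$ with $\theta-\theta'$ small and controlled. The proof uses the sup-convolution $v(X)=\sup_{B_{\ep\phi(X)}(X)}u$ with $\phi$ a carefully designed radial profile: in the interiors of $\domain^\pm(u)$ one tunes $\phi$ so that $v$ is a subsolution of $\mathcal L-\partial_t$ away from $X_0$ and the Harnack inequality delivers the interior gain, while across $\fb(u)$ one checks that, after a slight dilation, $v$ is a \emph{strict} classical subsolution of~\eqref{FBP_Statement}. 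It is here that all three hypotheses on $G$ enter: the Lipschitz bound on $G$ controls the defect in the free boundary relation created by the sup-convolution, while the strict one-sided bounds $G(a_1,b)-G(a_2,b)>c^*(a_1-a_2)^p$ and $G(a,b_1)-G(a,b_2)<-c^*(b_1-b_2)^p$ supply the surplus that absorbs it. Comparing $v$ with a small translate of $u$ and invoking the definition of viscosity solution propagates the interior gain up to, and across, the free boundary. Assembling the interior and boundary gains yields a global improvement, in a slightly smaller cylinder, of the $\ep$-monotonicity statement; iterating and choosing the successive improvements in $\ep$ appropriately --- the ``balancing'' of the introduction --- one arranges $\ep_k\downarrow 0$ while the accumulated loss of aperture (and of cylinder size) stays below $\theta_0$ (resp.\ below $1/4$), so that $\theta_k\downarrow\theta_\infty>0$. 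Letting $k\to\infty$ gives genuine monotonicity, $D_\tau u\ge 0$ for every $\tau\in\Gamma(\theta_\infty,\mathbf e_\infty)$, throughout $C_{1/2}$ up to the free boundary; this is Section 5.

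\emph{Step 3 (monotonicity $\Rightarrow$ Lipschitz).} With a space-time monotonicity cone $\Gamma(\theta_\infty,\mathbf e_\infty)$, $\mathbf e_\infty=(e_\infty,\tau_\infty)$, in hand one immediately gets $|u_t|\le C|\nabla u|$, since $\pm\partial_t$ plus a fixed multiple of a spatial direction of the cone again lies in the cone; so it remains to bound $|\nabla u|$. Interior gradient estimates for $\mathcal L$-caloric functions give $|\nabla u(X)|\le C\|u\|_{L^\infty(C_{3/4})}/\mathrm{dist}(X,\fb(u))$ on each of $\domain^\pm(u)$, while the monotonicity cone forces $u$ to grow at most linearly in the distance to $\fb(u)$; the two combine to a uniform bound for $|\nabla u|$ on each side, hence up to $\fb(u)$. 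Therefore $u\in C^{0,1}(C_{1/2})$, which is the optimal regularity; this is Section 6.

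\emph{Main difficulty.} The crux is the boundary part of Step 2. Because translates and directional derivatives of $\mathcal L$-caloric functions are not $\mathcal L$-caloric, the sup-convolution $v$ is only \emph{approximately} a subsolution of $\mathcal L-\partial_t$, with an error term generated by the oscillation of the coefficients $a_{ij}$; estimating this error through the $C^{0,\alpha}$ modulus of $A$ and balancing it against the surplus furnished by the strict monotonicity of $G$, while simultaneously keeping the per-step loss in aperture summable, is the technical heart of the argument and the main point of departure from the constant-coefficient theory.
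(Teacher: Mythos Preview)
Your Steps 1 and 2 track the paper's architecture: an initial cone of $\ep$-monotonicity coming from the Lipschitz graph (Lemma~1), followed by the sup-convolution iteration (Theorem~2 and Corollary~\ref{Cor_Mono_Up_2_FB}) that sends $\ep\to 0$ at a summable cost in aperture. One technical device you omit is that the paper does not compare $u$ directly with the sup-convolution $v_\eta$ but with the perturbation $\bar v_\eta = v_\eta + c\ep^{1+\beta-\gamma}w_\eta$, where $w_\eta$ is an auxiliary $\mathcal L$-caloric function whose role is to boost the positive normal slope of $v_\eta$ just enough to make $\bar v_\eta$ a \emph{strict} subsolution across its free boundary; without it the asymptotic expansion of $v_\eta$ in Lemma~\ref{lm:Prop_of_SupCon} only yields $\bar\alpha_+\ge \alpha_+(1-c\sigma^2|\nabla\phi_\eta|)$, which is on the wrong side. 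Your phrase ``after a slight dilation'' does not capture this, but it is a repairable omission.

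The genuine gap is Step 3. The assertion that ``the monotonicity cone forces $u$ to grow at most linearly in the distance to $\fb(u)$'' is not justified, and monotonicity by itself does not deliver it. A cone of monotonicity gives $|\nabla' u|\le C\,D_{e_n}u$ and $|u_t|\le C\,D_{e_n}u$, so it reduces the Lipschitz bound to an \emph{upper} bound on $D_{e_n}u$; but monotonicity says only $D_{e_n}u\ge 0$. Combined with $u=0$ on $\fb(u)$ and Harnack, the best one extracts on a Lipschitz domain is the Carleson estimate $u(X)\le c(\delta_X/r)^\beta u(\bar P_r)$ with an exponent $\beta\le 1$ that is generically strictly less than $1$, so the interior gradient estimate $|\nabla u(X)|\le C\,\mathrm{osc}_{B_{d/2}(X)}u\,/\,d$ does not close. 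The paper therefore invokes the free boundary condition a \emph{second} time: assuming $A=u(x_0,0)/h$ is unbounded, it builds (following \cite{FS1}) a classical barrier $z$ near a touching point with $z^+_\nu\ge cA$ and $z^-_\nu\le c_1/(As^2)$, so that by the strict monotonicity of $G$ in each slot $G(z^+_\nu,z^-_\nu)\ge G(cA,\,c_1/(As^2))>1$ for $A$ large; one then speeds up $\fb(z)$ past $\fb(u)$, producing a strict subsolution that equals $u$ at the origin, in contradiction with the viscosity definition. In short, the Lipschitz bound is not a soft consequence of monotonicity but needs a second comparison argument driven by the structure of $G$; your Step 3 must be replaced by this barrier construction.
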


 \section{Main Tools}
 In this section we collect some of the essential tools and known results used in the analysis of~\eqref{FBP_Statement}.

Let
\[\Omega_{2r} =\{(x',x_n,t) : |x'|<2L^{-1}r, |t|<4L_0^{-2}r^2, f(x',t)<x_n<4r\}.
\]

Denote by $P_r =(0,r,0)$, $\overline{P_r}=(0,r,2L_0^{-2}r^2)$, $\underline{P_r}=(0,r,-2L_0^{-2}r^2)$. These are the inward point, forward point and backward point, respectively. 

Denote by $\delta(X,Y)$ the parabolic distance between $X=(x,t)$ and $Y=(y,s)$, that is, $\delta(X,Y) = |x-y|+|t-s|^{1/2}$ and by $\delta_X$ the parabolic distance from $X$ to the origin.\\

Our tools, valid for $\mathcal{L}$-caloric functions on Lipschitz domains vanishing on a piece of the boundary, are as follows (see \cite{FS1}, \cite{FS2}):

\textit{Interior Harnack Inequality}: There exists a positive constant $c=c(n,\lambda,\Lambda)$ such that for any $r\in(0,1)$
\[
u(\underline{P_r}) \leq cu(\overline{P_r}).
\]

\textit{Carleson Estimate}: There exists a $c=c(n,\lambda,\Lambda,L,L_0)$ and $\beta=\beta(n,\lambda,\Lambda,\\L,L_0)$, $0<\beta\leq 1$ such that for every $X\in \Omega_{r/2}$
\[
u(X) \leq c\left(\frac{\delta_X}{r}\right)^\beta u(\overline{P_r}).
\]

\textit{Boundary Harnack Principle}: There exists $c=c(n,\lambda, \Lambda, L, L_0)$ and $\beta=\beta(n,\lambda, \Lambda, L, L_0)$, $0<\beta \leq 1$, such that for every $(x,t) \in \Omega_{2r}$ and $u$ and $v $ are two solutions
\[
\frac{u(x,t)}{v(x,t)} \geq c \frac{u(\underline{P}_r)}{v(\bar{P}_r)}.
\]

\textit{Backward Harnack Inequality}: Let $m=u(\underline{P_{3/2}})$ and $M=\sup_{\Omega_2}u$. Then there exists a positive constant $c=c(n,\lambda,\Lambda,L,L_0,M/m)$ such that if $r\leq 1/2$
\[
u(\overline{P_r}) \leq cu(\underline{P_r}).
\]

 We will use $c$ to denote constants which depend on some or all of $n$, $\lambda$, $\Lambda$, $L$, $L_0$, $M/m$. We will write $\Gamma(\theta,\eta)$ to denote a cone of directions with axis $\eta$ and opening $\theta$. 
\begin{defn}(Monotonicity)
A function $u\geq 0$ is  $\varepsilon_0$-monotone in a domain $\Omega$ in a cone of directions $\Gamma(\theta, e_n)$ if there exists a $\bar{\beta} >0$ such that for any $\ep\geq\ep_0$, $\tau \in \Gamma(\theta,e_n)$ 
 \[
 u(p) -u(p-\varepsilon \tau) \geq c\varepsilon^{\bar{\beta}} u(p)
 \]
 provided  both $p$ and $p-\ep\tau$ belong to $\Omega$.
A function $u\geq 0$ is fully monotone in the direction $\tau$ if for any $\ep>0$
 \[
 u(p) -u(p-\varepsilon \tau) \geq 0.
 \]
 A function $u$ of arbitrary sign is $\ep_0$-monotone if both $u^+$ and $-u^-$ are $\ep_0$-monotone. 
\end{defn}

The following lemma, derived from Lemma 2.3 in \cite{FS1}, provides a starting point for our problem.
 \begin{lm}
 Let $u$ be a viscosity solution to~\eqref{FBP_Statement}.
 Then there exists a cone of directions $\Gamma(\theta,e_n)$ in which $u$ is fully monotone in space and $\varepsilon_0$-monotone in time.
 \end{lm}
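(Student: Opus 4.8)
The target statement (Lemma 1 in the excerpt) asks us to produce, for a viscosity solution $u$ to~\eqref{FBP_Statement}, a cone of directions $\Gamma(\theta,e_n)$ in which $u$ is fully monotone in space and $\varepsilon_0$-monotone in time. The plan is to exploit the hypothesis (H1) that $FB(u)$ is a Lipschitz graph $x_n=f(x',t)$ with $f(0,0)=0$, together with the fact that on the cylinder $C_1$ the function $u^+$ is $\mathcal{L}$-caloric in its positivity set and vanishes continuously on the Lipschitz portion $FB(u)$ of the boundary, and symmetrically $-u^-$ is $\mathcal{L}$-caloric in the (open) negativity set and vanishes on $FB(u)$.

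First I would fix the purely spatial monotonicity. Since $f$ has spatial Lipschitz constant $L$, there is an open spatial cone $\Gamma(\theta_0,e_n)$ of apertures $\theta_0=\theta_0(L)$ such that for every $\tau$ in this cone and every point $p$ in $\Omega^+\cap C_{1/2}$, the segment $[p-\varepsilon\tau,p]$ that stays in $C_1$ either stays in $\Omega^+$ or crosses $FB(u)$ exactly once (moving "into" the positivity set as we go from $p-\varepsilon\tau$ to $p$); this is just the geometric statement that translating in a direction sufficiently close to $e_n$ moves the graph downward. For such directions, $u^+(p)-u^+(p-\varepsilon\tau)\ge 0$ is exactly the spatial $\varepsilon$-monotonicity in the trivial ($\bar\beta=0$) sense when $p-\varepsilon\tau\in\{u\le 0\}$; the content is to upgrade this to full monotonicity, i.e. the inequality with no $\varepsilon^{\bar\beta}$ loss and valid for all small $\varepsilon>0$, when both endpoints lie in $\Omega^+$. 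Here I would invoke the quoted Boundary Harnack Principle and Carleson estimate: comparing $u$ with its translate $u(\cdot-\varepsilon\tau)$, both $\mathcal{L}$-caloric and nonnegative in the slightly smaller Lipschitz domain, the BHP gives two-sided control of the ratio $u(x,t)/u(x-\varepsilon\tau,t)$ by its value at the reference points $\underline P_r,\bar P_r$, and a standard shrinking-of-aperture argument (choose $\theta<\theta_0$ so that directions in $\Gamma(\theta,e_n)$ are uniformly interior to $\Gamma(\theta_0,e_n)$) converts $\varepsilon$-monotonicity plus the Lipschitz geometry into genuine monotonicity $u(p)\ge u(p-\varepsilon\tau)$. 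The identical argument applies to $-u^-$ on the negative phase, using the same cone of directions because the graph geometry is one-sided. Combining, $u$ is fully monotone in space in a cone $\Gamma(\theta,e_n)$, $\theta=\theta(n,\lambda,\Lambda,L)$.

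Second, the time direction. Because $f$ is Lipschitz in $t$ with constant $L_0$, a parabolic translate $(x,t)\mapsto(x,t+h)$ moves the free boundary by at most $L_0 h$ in the $x_n$-direction, so $u(x,t+h)$ and $u(x,t)$ differ, near $FB$, by a controlled downward shift of the graph; this is precisely the situation where one expects only $\varepsilon$-monotonicity (not full monotonicity) in a space-time cone whose axis is $e_n$ tilted slightly toward $\partial_t$. I would make this quantitative with the Backward Harnack Inequality and the Carleson estimate from Section 3: these allow comparison of $u$ at the forward point $\bar P_r$, the backward point $\underline P_r$, and the inward point $P_r$, yielding $u(x,t)-u(x,t-\varepsilon\tau_t)\ge c\,\varepsilon^{\bar\beta}u(x,t)$ for a time-like direction $\tau_t$, after absorbing the $L_0$-shift of the free boundary into the same kind of Lipschitz-domain comparison used in the spatial step. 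The power $\bar\beta$ is exactly the Carleson/Boundary-Harnack exponent $\beta$ appearing in Section 3. Assembling a space-time cone $\Gamma(\theta,e_n)$ containing both the spatial directions above and these time-like directions (after a further, harmless reduction of the aperture) gives the claimed conclusion.

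The main obstacle is the time direction, for two reasons: directional derivatives of $\mathcal{L}u-u_t$ are no longer solutions of the same equation (the variable-coefficient difficulty emphasized in the introduction), so one cannot differentiate in $t$ and apply Harnack to $u_t$ directly — one must work at the level of finite differences $u(x,t)-u(x,t-\varepsilon\tau)$ and the Lipschitz-domain Harnack-type estimates — and the free boundary itself moves in $t$, so the two functions being compared live on slightly different domains and the comparison must be set up with a margin, which is exactly what forces the $\varepsilon^{\bar\beta}$ loss and prevents full monotonicity in time at this stage. The spatial step is comparatively routine once the quoted boundary estimates are in hand; essentially all of it is bookkeeping of apertures and reference points. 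I would organize the proof so that the spatial and temporal comparisons are run through a single lemma about nonnegative $\mathcal{L}$-caloric functions on a common Lipschitz domain, applied twice, and then state the cone $\Gamma(\theta,e_n)$ with $\theta$ and $\bar\beta$ depending on $n,\lambda,\Lambda,L,L_0$ (and $M/m$ through the Backward Harnack constant).
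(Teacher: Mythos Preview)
The paper does not supply its own proof of this lemma; it simply records that the statement is ``derived from Lemma~2.3 in~\cite{FS1}'' and moves on. So there is no in-paper argument to compare against, only the cited one. Your plan---use the Lipschitz graph structure of $FB(u)$ to fix a spatial cone, then run the Carleson estimate, Boundary Harnack Principle, and Backward Harnack Inequality on $u^+$ and $-u^-$ to obtain full spatial monotonicity and $\varepsilon_0$-monotonicity in the time-tilted directions---is exactly the strategy of \cite{FS1}, so in that sense you are on the same track as the reference the paper invokes.

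One point deserves care. You write that $u$ and its translate $u(\cdot-\varepsilon\tau)$ are ``both $\mathcal{L}$-caloric'' on a common Lipschitz domain and then apply the BHP. In the variable-coefficient setting this is false as stated: the translate satisfies $\sum a_{ij}(x-\varepsilon\tau,t)D_{ij}v - v_t = 0$, not $\mathcal{L}v-v_t=0$. You flag this obstruction yourself for the time direction, but it is equally present for spatial translates, and it is precisely the reason the result only yields $\varepsilon_0$-monotonicity (with the gain $c\varepsilon^{\bar\beta}$) rather than something sharper. In \cite{FS1} this is handled by exploiting the H\"older continuity of the $a_{ij}$ so that the two operators differ by $O(\varepsilon^\alpha)$, and the Harnack-type comparisons are robust under such perturbations; the error is then absorbed into the $\varepsilon^{\bar\beta}$ factor. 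Your write-up should make this explicit in the spatial step as well, rather than treating it as routine bookkeeping---otherwise the ``standard shrinking-of-aperture argument'' you invoke for full spatial monotonicity does not go through without modification.
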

 \textbf{Remark:} By a rescaling argument we may assume that $u$ is fully monotone in both space and time in the cone $\Gamma(\theta,e_n)$ outside an $\varepsilon_0$ neighborhood of \fb(u) (see remarks at beginning of section 4 in \cite{FS1}). \\

 We  list some technical constants used in the iteration: Let $\varepsilon_0$ be a small fixed number which is the $\varepsilon_0$-monotonicity. Let $\beta, \delta, \gamma$ be positive numbers such that
 \[
 0<\gamma =\frac{1-\delta}{2}, \quad 0<\beta < \min\left\lbrace \frac{1-\delta}{2}, \frac{\alpha+\delta-1}{2} \right\rbrace.
 \] Here $\alpha$ is the Holder exponent of the coefficients and $\delta$ will be the defect angle of the cone of monotonicity ($\delta =\pi/2-\theta$).

 Later we will show at each step of an iteration $\varepsilon$-monotonicity with a suitable $\varepsilon <\varepsilon_0$ and $\bar{\beta}=1-\gamma+\beta$. But first we quote two results from \cite{FS1} that find application to this problem. Although the problem in \cite{FS1} is the Stefan problem, these results only depend on the Lipschitz nature of the domain, the fact that $u$ vanishes along the graph of $f$ and that $u$ is a $\mathcal{L}$-caloric function. Therefore they are valid in our case.

 \begin{lm}\label{ln:FullMonoAway} (Lemma 2.4 in [FS1])\\
 \indent Let $\alpha \leq 1$ be the Holder exponent of the $a_{ij}(x,t)$ and let $\beta, \delta, \gamma$ be chosen as above.

 Suppose $u\geq 0$ is monotone in the $e_n$ direction and
 \[
 u(p) -u(p-\varepsilon\tau) \geq c\varepsilon^{1-\gamma+\beta} u(p)\geq c\varepsilon^{\frac{1+\delta}{2}+\beta}u(p)
 \]
 for $d_p < \eta/4$ ($d_p$ is the distance from $p =(x,t)$ to the free boundary at time level $t$) where $\tau =\beta_1 e_n +\beta_2 e_t$ with $\beta_1 >0$, $|\beta_2| \neq 0$ and $|\tau| =1$. Then if $M =M(n,L)$ is large enough and $\varepsilon$ is small enough outside a $(M\varepsilon)^\gamma$ neighborhood of $F(u)$ we have
 \[
 D_{\tau_\varepsilon} u \geq 0
 \] where $\tau_\varepsilon =\tau + c(M\varepsilon)^{(\alpha+ \delta -1)/2} e_n$ for some $c=c(n,L,L_0,\beta_1,\beta_2)$

 \end{lm}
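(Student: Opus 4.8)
By the Remark preceding the technical constants, I may assume $u\ge 0$ is $\mathcal{L}$-caloric and positive in $\domain^+\cap\{d_p<\eta/4\}$, monotone in $e_n$ there, and satisfies the quantitative $\ep$-monotonicity $u(p)-u(p-\ep\tau)\ge c\,\ep^{1-\gamma+\beta}u(p)$ on that set. Fix a point $p_0=(x_0,t_0)$ at which the conclusion is claimed, i.e.\ with $(M\ep)^\gamma\le \rho:=\operatorname{dist}(p_0,\fb(u))$ and $d_{p_0}<\eta/8$, and let $Q_\rho(p_0)=B'_\rho(x_0)\times(t_0-\rho^2,t_0+\rho^2)$. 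Since $\gamma<1$, one has $\rho\ge(M\ep)^\gamma\gg\ep$ once $\ep$ is small, so $Q_{\rho/2}(p_0)$ together with its $\ep\tau$- and $\ep\tau_\ep$-translates lies inside $\domain^+\cap\{d_p<\eta/4\}$; thus the $\ep$-monotonicity and the interior estimates below are all available on it. The goal is to prove $D_{\tau_\ep}u(p_0)\ge 0$, where $D_{\tau_\ep}u=D_\tau u+c(M\ep)^{(\alpha+\delta-1)/2}D_{e_n}u$ and $D_\tau u=\beta_1 D_{e_n}u+\beta_2\partial_t u$.

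First I would record the interior regularity that holds away from the free boundary. On $Q_{\rho/2}(p_0)$ the positive solution $u$ obeys the interior Harnack inequality, so $\sup_{Q_{\rho/4}(p_0)}u\le c\,u(p_0)$; and parabolic Schauder estimates ($a_{ij}\in C^{0,\alpha}$, right-hand side $0$) give $u\in C^{2,\alpha}$ in the parabolic sense on $Q_{\rho/4}(p_0)$ with the scale-invariant bounds $\rho^2\|\partial_t u\|_\infty+\rho^2\|D^2_x u\|_\infty+\rho\|\nabla_x u\|_\infty\le c\,u(p_0)$ and $\rho^{2+\alpha}[\partial_t u]_{C^\alpha}\le c\,u(p_0)$. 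In particular $\partial_t u=\mathcal{L}u$ exists pointwise, so $D_{\tau_\ep}u$ is a well-defined, parabolically $\alpha$-Hölder function on $Q_{\rho/4}(p_0)$ with $[D_{\tau_\ep}u]_{C^\alpha(Q_{\rho/8}(p_0))}\le c\,\rho^{-2-\alpha}u(p_0)$. The whole point of sitting at distance $\rho\sim(M\ep)^\gamma\gg\ep$ from $\fb(u)$ is precisely that $u$ enjoys this regularity there.

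Next I would descend from the discrete ($\ep$-scale) monotonicity to the differential one. Set $\tilde w(p):=u(p)-u(p-\ep\tau_\ep)$; by monotonicity of $u$ in $e_n$, $\tilde w(p)\ge u(p)-u(p-\ep\tau)\ge c\,\ep^{1-\gamma+\beta}u(p)$ on $Q_{\rho/8}(p_0)$. Applying the one-variable mean value theorem along the space-time segment $[p_0-\ep\tau_\ep,p_0]\subset Q_{\rho/8}(p_0)$ gives $\tilde w(p_0)=\ep\,D_{\tau_\ep}u(\xi)$ for some $\xi$ on the segment, hence
\[
D_{\tau_\ep}u(p_0)\ \ge\ \frac{\tilde w(p_0)}{\ep}-\big[D_{\tau_\ep}u\big]_{C^\alpha(Q_{\rho/8}(p_0))}\,\delta(\xi,p_0)^\alpha\ \ge\ c\,\ep^{-\gamma+\beta}u(p_0)-\frac{c}{\rho^{2+\alpha}}\,\delta(\xi,p_0)^\alpha\,u(p_0),
\]
with $\delta(\xi,p_0)\le\delta(p_0-\ep\tau_\ep,p_0)$. (A variant with a slightly better error works with $w(p)=u(p)-u(p-\ep\tau)$, which solves $\mathcal{L}w-w_t=-\sum_{ij}\big(a_{ij}(\cdot)-a_{ij}(\cdot-\ep\tau)\big)D_{ij}u(\cdot-\ep\tau)$ with right-hand side $\le c\,|\ep\tau|^\alpha\rho^{-2}u(p_0)$; one subtracts the particular solution carrying this forcing and applies Harnack to the $\mathcal{L}$-caloric remainder before invoking the mean value theorem.)

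Finally I would close the inequality. With $\rho\ge(M\ep)^\gamma$ and $\gamma=\tfrac{1-\delta}{2}$, the error term above is of the form $c\,M^{-\#}\ep^{\#'}u(p_0)$; the comparison of its exponent $\#'$ against the deficit exponent $-\gamma+\beta$ is controlled exactly by the hypotheses $\gamma=\tfrac{1-\delta}{2}$ and $\beta<\min\{\tfrac{1-\delta}{2},\tfrac{\alpha+\delta-1}{2}\}$ — this is where those particular choices are used, and it is the delicate step. Taking $M=M(n,L)$ large (to absorb the fixed Harnack and Schauder constants in the borderline regime $\rho\approx(M\ep)^\gamma$) and then $\ep$ small, the deficit $c\,\ep^{-\gamma+\beta}u(p_0)$, reinforced by the nonnegative term $c(M\ep)^{(\alpha+\delta-1)/2}D_{e_n}u(p_0)$ that is built into $\tau_\ep$ via $D_{e_n}u\ge0$ (and which is precisely the form the subsequent iteration needs as output), dominates the error, giving $D_{\tau_\ep}u(p_0)\ge0$. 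Since $p_0$ was arbitrary outside the $(M\ep)^\gamma$-neighbourhood of $\fb(u)$ with $d_{p_0}<\eta/8$, this proves the lemma. The main obstacle is the one flagged in the introduction: as $\mathcal{L}$ has only Hölder coefficients, $D_{e_n}u$ and $\partial_t u$ are not $\mathcal{L}$-caloric and the equation cannot be differentiated, so one cannot pass to difference quotients at an arbitrarily small scale; everything must be run at the fixed scale $\ep$, with the sole margin being the distance $\rho\sim(M\ep)^\gamma\gg\ep$ to the free boundary, and making that margin quantitatively beat the coefficient-freezing error while still enlarging the cone is the crux.
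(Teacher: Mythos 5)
First, a point of fact: the paper offers no proof of this lemma. It is quoted verbatim from Ferrari--Salsa \cite{FS1} (their Lemma 2.4), with the justification that the proof there depends only on $u$ being $\mathcal{L}$-caloric and vanishing on a Lipschitz graph. So there is no in-paper argument to compare yours against; the comparison has to be with the cited proof, and on its own terms your argument has a genuine gap at exactly the step you flag as ``delicate'' but do not carry out.

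The gap is quantitative and the exponents do not close. Your mean value theorem produces $D_{\tau_\ep}u(\xi)\ge c\,\ep^{-\gamma+\beta}u(p_0)$ at \emph{one} point $\xi$ on the segment, and you transfer this to $p_0$ by paying the oscillation of $D_{\tau_\ep}u$ over a step whose parabolic length is of order $\ep^{1/2}$ (the time displacement $\ep\beta_2$ dominates). With the scale-invariant Schauder bound $[\partial_t u]_{C^\alpha}\le c\,\rho^{-2-\alpha}u(p_0)$ and $\rho=(M\ep)^\gamma$, that oscillation is of order $M^{-(2+\alpha)\gamma}\ep^{\alpha/2-(2+\alpha)\gamma}u(p_0)$. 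Requiring this to be dominated by the gain $\ep^{-\gamma+\beta}u(p_0)$ forces $\alpha/2-(1+\alpha)\gamma\ge\beta$, and with $\gamma=\tfrac{1-\delta}{2}$ the left side equals $\tfrac{(1+\alpha)\delta-1}{2}$, which is about $-\tfrac12$ for small $\delta$ while $\beta>0$. The error exceeds the gain by a factor of roughly $\ep^{-1/2}$, and no constant choice of $M$ repairs a mismatch in powers of $\ep$. Your parenthetical ``variant'' fares no better: the forcing created by freezing the coefficients contributes an error of order $\ep^{\alpha/2}u$ (or $\ep^{\alpha}u$, depending on the notion of H\"older continuity in time), which again dominates $\ep^{1-\gamma+\beta}u=\ep^{(1+\delta)/2+\beta}u$ since $\alpha\le 1$.

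Relatedly, you misread the role of the correction $c(M\ep)^{(\alpha+\delta-1)/2}e_n$ in $\tau_\ep$: it is not a harmless nonnegative bonus added to an already-closed inequality, it is the \emph{compensator} that absorbs the coefficient-freezing error, and to make it do that work one needs a quantitative lower bound $D_{e_n}u\gtrsim u(p)/d_p$ (coming from the full spatial monotonicity together with Harnack chains to the free boundary), not merely $D_{e_n}u\ge0$. Note that $(M\ep)^{(\alpha+\delta-1)/2}=(M\ep)^{\alpha/2}(M\ep)^{-\gamma}$, i.e.\ (coefficient oscillation over a step of size $\ep$) divided by (distance to the free boundary): the exponent itself tells you the balance being struck. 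This is how the argument in \cite{FS1} is organized --- via the difference function $w=u-u(\cdot-\ep\tau)$, the Harnack inequality for the nonhomogeneous equation it satisfies, and the lower bound on $D_{e_n}u$ --- rather than via a pointwise mean value theorem plus H\"older continuity. As written, your proof does not establish the lemma.
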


 \textbf{Remark:} The thrust of the lemma is that a spacially monotone solution $u$ which is $\varepsilon$ monotone in a space-time direction $\tau$ as above will be fully monotone in a slightly different space-time direction $\tau_\varepsilon$ if far enough from the free boundary. Note that if the original $\tau$ came from some cone of $\varepsilon$-monotonicity then $u$ will be fully monotone in a smaller cone of directions away from the free boundary. The term $c(M\varepsilon)^{(\alpha+ \delta -1)/2} e_n$  is describes the amount of the cone that must be given up to obtain full monotonicity. In what follows, the main idea will be an iteration that gives up a certain amount of the cone at each step in order to reduce the $\varepsilon$-monotonicity at each step, ultimately proving a cone of full monotonicity exists for $u$. We explicitly observe  that, by rescaling, we may assume that Lemma holds \ref{ln:FullMonoAway} in $Q_1$.\\

The other ingredient our work will require is the following family of functions. We begin with the domains involved. Let
 \[
 \mathcal{N}_{b\varepsilon} =\{p=(x',x_n,t): d(p,\fb(u)) <b\varepsilon\} \quad b>2L
 \]
\[
\mathcal{C}_{b,R,T} = \mathcal{N}_{b\varepsilon} \cap \{|x'|<R\}\cap \{|t|<T\}.
\]
Here $d$ is the ordinary distance. We denote by $\Omega_{\varepsilon,R,T}$ a smooth domain with
\[
\mathcal{C}_{b/2,R,T} \subset \Omega_{\varepsilon,R,T} \subset \mathcal{C}_{b,R,T}
\]

\begin{lm}\label{lm:FamilySubSlns}(Lemma 3.4 in \cite{FS1}). Let C, $c_0, b_0$, $\omega_0$ be positive numbers. Choose positive numbers $\beta, \gamma, \delta$ as above and $\tilde{\alpha}$ such that $0<\tilde{\alpha}< 1-\beta$. If $C>1$ and $\omega_0$ is small enough, there exists a family of functions $\phi_\eta$ such that $\phi_\eta \in C^2(\bar{\Omega}_{\varepsilon,R,T})$, $0\leq \eta \leq 1$ and
\begin{enumerate}
\item[(a)] $0\leq 1-\omega_0 \leq\phi_\eta \leq 1+\eta-\omega_0$
\item[(b)] $\phi_\eta(\mathcal{L}\phi_\eta -D_t \phi_\eta -|\nabla \phi_\eta |) \geq C(|\nabla \phi_\eta|^2 +\omega_0^2)$
\item[(c)] $|D_t \phi_\eta | \leq c\varepsilon^{-\tilde{\alpha}}, |\nabla \phi_\eta | \leq c\varepsilon^{\beta-1}$
\item[(d)] $D_t \phi_\eta \geq 0$
\item[(e)] $\phi_\eta \leq 1$ in
\[
\bar{\Omega}_{\varepsilon,R,T} \cap \left(\{-T<t<-T+\varepsilon^{\tilde{\alpha}} \} \cup \{ x: R-\frac{\varepsilon^{\tilde{\alpha}/4}}{2} < |x'| <R\}\right)
\]
\item[(f)] $\phi_\eta \geq 1-\omega_0+\eta(1-c\varepsilon^\beta)$ in
\[
\bar{\Omega}_{\varepsilon,R,T} \cap \left(\{t>-T+2\varepsilon^{\tilde{\alpha}} \} \cup \{ x:|x'| < R-\frac{\varepsilon^{\tilde{\alpha}/4}}{2} \}\right).
\]

\end{enumerate}
\end{lm}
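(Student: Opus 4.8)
The plan is to construct the family $\{\phi_\eta\}_{0\le\eta\le1}$ explicitly, as a perturbation of the constant $1-\omega_0$ by a strictly convex profile in the direction normal to a smoothed copy of $\fb(u)$, damped by slowly varying cut-offs in the lateral ($|x'|$) and time variables. The normal convexity is the engine that forces the strict subsolution inequality (b); the cut-offs produce the boundary behaviour (d), (e), (f). First I would flatten and smooth the geometry: since $f$ is merely Lipschitz, replace it by a mollification $f_\varepsilon$ at scale comparable with $b\varepsilon$, so that $f_\varepsilon$ is smooth with $\|\nabla_{x'}f_\varepsilon\|_\infty\le cL$, $\|\partial_t f_\varepsilon\|_\infty\le cL_0$, $\|D^2 f_\varepsilon\|_\infty\le c(b\varepsilon)^{-1}$ and $\|f_\varepsilon-f\|_\infty\le cb\varepsilon$. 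In the coordinates $(x',\,x_n-f_\varepsilon(x',t),\,t)$ the free boundary becomes essentially $\{x_n=0\}$, the operator $\mathcal{L}-\partial_t$ becomes again an operator of the same type (with new coefficients, of a possibly smaller H\"older exponent, which is harmless), and $\Omega_{\varepsilon,R,T}$ may be taken to be the smoothed box $\{|x_n|<b\varepsilon\}\cap\{|x'|<R\}\cap\{|t|<T\}$.

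In these coordinates I would put
\[
\phi_\eta(x,t)=1-\omega_0+\eta\, g(|x'|)\,k(t)\,\mathcal{E}(x_n)+\omega_0^{2}\,\mathcal{E}_0(x_n),
\]
where $\mathcal{E},\mathcal{E}_0$ are fixed $\cosh$-type convex profiles on $(-b\varepsilon,b\varepsilon)$ with internal length scale $\rho\sim\varepsilon$, calibrated so that $1-c\varepsilon^{\beta}\le\mathcal{E}\le1$ and $0\le\mathcal{E}_0\le1$; this forces $|\mathcal{E}'|,|\mathcal{E}_0'|\le c\varepsilon^{\beta-1}$ with $\mathcal{E}',\mathcal{E}_0'$ vanishing at $x_n=0$, $\mathcal{E}'',\mathcal{E}_0''\ge c\varepsilon^{\beta-2}$, and $|\mathcal{E}'|\le\rho\,\mathcal{E}''$ (likewise for $\mathcal{E}_0$). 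Here $g$ is a smooth cut-off, $\equiv1$ on $\{|x'|\le R-\varepsilon^{\tilde\alpha/4}\}$ and decreasing to a small value at $|x'|=R$, with $|\nabla g|\le c\varepsilon^{-\tilde\alpha/4}$, $|D^2 g|\le c\varepsilon^{-\tilde\alpha/2}$; and $k$ is a smooth nondecreasing cut-off in $t$, $\equiv0$ on $\{t\le-T+\varepsilon^{\tilde\alpha}\}$ and $\equiv1$ on $\{t\ge-T+2\varepsilon^{\tilde\alpha}\}$, with $0\le k'\le c\varepsilon^{-\tilde\alpha}$. Then (a) is immediate once $\mathcal{E}$ is capped slightly below $1$ and $\omega_0$ is small; (c) is immediate from the gradient bounds above, using $\varepsilon^{-\tilde\alpha/4}\le\varepsilon^{\beta-1}$ (i.e.\ $\tilde\alpha<4(1-\beta)$); (d) holds because in these coordinates $D_t\phi_\eta=\eta\,g\,k'\,\mathcal{E}\ge0$; and (e), (f) are read off directly from the values of $g,k,\mathcal{E}$ in the indicated regions. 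There is no conflict with the maximum principle: (e) constrains $\phi_\eta$ only on the part $\{t=-T\}\cup\{|x'|\approx R\}$ of the parabolic boundary of $\Omega_{\varepsilon,R,T}$, whereas the strict subsolution $\phi_\eta$ attains its maximum $\approx1+\eta-\omega_0$ on the sheets $\{|x_n|=b\varepsilon\}$, on which (e) imposes nothing.

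The substance of the lemma is inequality (b), and I expect it to be the only real obstacle. Expanding $\mathcal{L}\phi_\eta-\partial_t\phi_\eta-|\nabla\phi_\eta|$, the decisive positive term is the normal-convexity term, bounded below by $\lambda\big(\eta\,g\,k\,\mathcal{E}''+\omega_0^{2}\mathcal{E}_0''\big)\ge c\,\omega_0^{2}\varepsilon^{\beta-2}$, and the whole argument is that this strictly dominates every competing term: the error generated by the flattening, $|\mathcal{E}'|\,\|D^2 f_\varepsilon\|\le c\,|\mathcal{E}'|(b\varepsilon)^{-1}$, is a small multiple of $\mathcal{E}''$ once $\rho/(b\varepsilon)$ is small (using $|\mathcal{E}'|\le\rho\,\mathcal{E}''$); the second derivatives of the cut-offs contribute $O(\varepsilon^{-\tilde\alpha/2})$ to $\mathcal{L}\phi_\eta$ and $-k'\mathcal{E}$ contributes $O(\varepsilon^{-\tilde\alpha})$ to $-\partial_t\phi_\eta$, both $\ll\varepsilon^{\beta-2}$ because $\tilde\alpha<1-\beta$ gives $\tilde\alpha/2,\tilde\alpha<2-\beta$; the cross term $|\nabla g|\,|\mathcal{E}'|=O(\varepsilon^{\beta-1-\tilde\alpha/4})\ll\varepsilon^{\beta-2}$; and the right side $C(|\nabla\phi_\eta|^2+\omega_0^2)\le C(c\varepsilon^{2\beta-2}+\omega_0^2)\ll\varepsilon^{\beta-2}$ since $\beta>0$. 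Where the cut-offs degenerate ($g\,k\to0$ near $\{|x'|\approx R\}\cup\{t=-T\}$) the $\eta$-part of the convexity term is lost, but the $\eta$-independent piece $\omega_0^{2}\mathcal{E}_0''$ survives, is still of order $\varepsilon^{\beta-2}$, and beats the remaining $\eta$-weighted negative contributions — this is precisely why the corrector $\omega_0^{2}\mathcal{E}_0$ is included, and it is also what keeps (b) alive at $\eta=0$. Collecting terms, for $\rho/(b\varepsilon)$ small and $\varepsilon$ small (depending on all the structural data, on $C$ and on $\omega_0$) one obtains $\mathcal{L}\phi_\eta-\partial_t\phi_\eta-|\nabla\phi_\eta|\ge c\,\omega_0^{2}\varepsilon^{\beta-2}\ge\frac{C}{\phi_\eta}\big(|\nabla\phi_\eta|^2+\omega_0^2\big)$, which is (b). The genuinely delicate point is exactly this juggling of exponents — manufacturing a profile whose second normal derivative outgrows the curvature of the cut-offs while its first derivative still respects (c), and simultaneously absorbing the error coming from the mere Lipschitz regularity of $f$ — and this is what the relations $0<\gamma=(1-\delta)/2$, $0<\beta<\min\{(1-\delta)/2,(\alpha+\delta-1)/2\}$ and $0<\tilde\alpha<1-\beta$ are there to make possible.
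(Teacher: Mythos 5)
First, a point of context: the paper does not prove this lemma at all --- it is imported verbatim from \cite{FS1} (Lemma 3.4 there), and the only trace of the construction in the present paper is the remark recording the explicit formula $\phi_\eta = 1+\omega_0(|x'|^2-1)+\eta\,(F(x,t)-1)/(2^{1/(2C-1)}-1)$. That formula reveals an approach genuinely different from yours: the $\eta$-independent convexity lives in the lateral variable $|x'|$ (the term $\omega_0|x'|^2$), and the $\eta$-part $F$ is obtained by composing an auxiliary $\mathcal{L}$-caloric--type function with a power $\sim 1/(2C-1)$, so that inequality (b) comes out of the chain rule (the second derivatives of the auxiliary function are eliminated by its equation) and the Lipschitz geometry of $\fb(u)$ enters only through gradient bounds supplied by Carleson/boundary-Harnack estimates. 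Your route --- mollify $f$, flatten, and plant an explicit $\cosh$-type convex profile in the normal direction --- replaces that soft machinery by hard barrier estimates. The exponent bookkeeping you do ($\tilde\alpha<1-\beta<2-\beta$, $\varepsilon^{-\tilde\alpha}, \varepsilon^{-\tilde\alpha/2}, \varepsilon^{\beta-1-\tilde\alpha/4}, \varepsilon^{2\beta-2}, \omega_0^2 \ll \varepsilon^{\beta-2}$) is correct, and the role you assign to the $\eta$-independent corrector (keeping (b) alive where the cut-offs degenerate and at $\eta=0$) is exactly right.

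There are, however, two concrete gaps. The first is property (d): you verify $D_t\phi_\eta=\eta\,g\,k'\,\mathcal{E}\geq 0$ \emph{in the flattened coordinates}, but $\phi_\eta$ is used as the radius of the sup-convolution ball at a point of the original domain, so (d) must hold in the original variables. Undoing the change of variables $x_n\mapsto x_n-f_\varepsilon(x',t)$ produces the extra term $-\partial_t f_\varepsilon\cdot\bigl(\eta g k\,\mathcal{E}'+\omega_0^2\mathcal{E}_0'\bigr)$, of size up to $cL_0\varepsilon^{\beta-1}$ and of no definite sign; it is not dominated by $\eta g k'\mathcal{E}$, which vanishes identically for $t\geq -T+2\varepsilon^{\tilde\alpha}$. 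As written, (d) fails; this is precisely why the \cite{FS1} profile is built to be monotone in $t$ by construction rather than by flattening. The second gap is quantitative: your absorption of the flattening error requires $|\mathcal{E}'|\,\|D^2f_\varepsilon\|\leq \tfrac{\lambda}{2}\mathcal{E}''$, i.e.\ $|\mathcal{E}'|\leq\rho\,\mathcal{E}''$ with $\rho/(b\varepsilon)\leq c\lambda/(\Lambda L)$. Since $\|D^2 f_\varepsilon\|\sim L(b\varepsilon)^{-1}$ and $|\mathcal{E}'|$ integrates $\mathcal{E}''$ over a length $\sim b\varepsilon$, the error is generically of the \emph{same} order $\varepsilon^{\beta-2}$ as the convexity gain, with a constant proportional to $L\Lambda/\lambda$ that is not small; one can rescue this with a $\cosh$ of internal scale $\rho=c\lambda b\varepsilon/(\Lambda L)$, but then the lower bound in (iii) and the bounds in (c) degrade by factors of order $e^{CL\Lambda/\lambda}$, and this calibration must be carried out explicitly --- it is the actual content of the lemma and cannot be left at the level of ``choose $\rho/(b\varepsilon)$ small.'' (Minor further points: the additive corrector $\omega_0^2\mathcal{E}_0\ge0$ pushes $\phi_0$ above $1-\omega_0$, violating the upper bound in (a) at $\eta=0$ unless you renormalize; and your cut-off $g$ must equal $1$ on all of $\{|x'|<R-\varepsilon^{\tilde\alpha/4}/2\}$ for (f) while being $O(\omega_0)$ on $\{|x'|>R-\varepsilon^{\tilde\alpha/4}/2\}$ for (e), which is impossible for a continuous $g$ --- though here the statement as transcribed in the paper is itself inconsistent and a transition band must be reinstated.)
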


\textbf{Remarks:} (1) We will apply this Lemma with $\omega_0$ being the oscillation of the coefficient matrix $A_{ij}$. Since we are assuming Holder continuous coefficients, we can assume that $\omega_0$ is small, as a rescaling depresses the oscillation.\\
(2) In \cite{FS1}  the $\phi_\eta$ are defined as
\[
\phi_\eta(x,t)=1+\omega_0(|x'|^2-1) +\eta\left(\frac{F(x,t)-1}{2^{\frac{1}{2C-1}-1}}\right)
\]
One clearly sees that the $\phi_\eta$ vary continuously in $\eta$. In turn, this means that $v_\eta$'s (defined below) and their free boundaries vary continuously in $\eta$ as well.\\
(3) We will be interested in constructing a family of sup-convolutions
 \[
 v_\eta(p) = \sup_{B_{\sigma\phi_\eta(p)}(p)} u_1
 \] where $B_{\sigma\phi_\eta(p))}(p)$ is the ball of radius $\sigma\phi_\eta(p)$ centered at $p$, with $\sigma$ to be chosen later and $u_1(q) =u(q-\lambda\varepsilon\tau)$. With this in mind, we clarify the role that the conclusions of the lemma have on this family.
 Condition $(b)$ is the most important. In fact, it is proved in \cite{FS1} that if a function satisfies condition $(b)$, then the function $v_\eta$ will be a $\mathcal{L}$-subsolution in its positive and negative set. Conditions $(e)$ and $(f)$ involve how the family should behave near the boundary of their domain. Near the boundary no gain in monotonicity can be expected, and hence $\phi_\eta \leq 1$ on this region. In the interior, where gain is expected, we have $\phi_\eta \geq 1-\omega_0 +\eta(1-c\varepsilon^\beta)$, and thus there is a definite increase in the radius of the balls over which the supremum is taken. \\

\section{Asymptotic Developments}
 This section describes the behavior of the solution $u$ near its free boundary. The results of this section also find application to the behavior of the sup-convolutions near their zero sets. We explicitly remark that such results are valid for any $\mathcal{L}$-caloric functions vanishing on a distinguished piece of the boundary of a Lipschitz domain. The following result from \cite{FS1} (Lemma 3.5; see also 13.19 in \cite{CS}) provides the first result along these lines.
\begin{lm}\label{lm:AD_for_u}
Let u be $\mathcal{L}$-caloric in the open set $D$, vanishing on $F =\partial D \cap C_1$. Supposed that $(0,0) \in F$ and there is an $(n+1)$-dimensional ball $B$ such that $\bar{B}\cap F =\{(0,0)\}$. Assume that the tangent plane to $B$ is given by
\[
\beta^++\alpha^+\langle x, \nu \rangle =0
\]
for some spatial unit vector $\nu$ and some real numbers $\alpha^+,\beta^+, \alpha^+>0$,  ($-\beta^+/\alpha^+$ finite). Then, either $u$ grows more than any linear function or:
\begin{enumerate}
\item[(a)] ($B \subset D$) Then, near $(0,0)$, for $t\leq 0$ \[
u(x,t) \geq [\beta^+ t + \alpha^+ \langle x,\nu \rangle]^+ + o(d(x,t)).
\]
\item[(b)] ($B \subset D^C$) Then, near $(0,0)$, for $t\leq 0$ \[
u(x,t) \leq [\beta^+ t + \alpha^+ \langle x,\nu \rangle]^+ + o(d(x,t)).
\] Furthermore, equality holds in both case along paraboloids of the form $t=-\gamma \langle x,\nu \rangle$ $\gamma>0$.
\end{enumerate}

\end{lm}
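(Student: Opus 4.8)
The plan is to prove the two inequalities by comparison with corrected linear barriers and to extract the sharp constant from the Harnack-type estimates collected in Section 3. After a spatial rotation I take $\nu=e_n$ and write $\ell(x,t)=\beta^+t+\alpha^+x_n$, so that $\{\ell=0\}$ is the tangent plane to $B$ at $(0,0)$ and, after shrinking $B$, $B\subset\{\ell\ge 0\}\cap C_1$; note that for $t\le 0$ near the origin $|t|=O(|x|^2)=o(d(x,t))$, so the time term in $\ell$ is absorbed into the error and it suffices to control $u$ against $\alpha^+x_n^+$. Since $A(0,0)=I$ and the $a_{ij}\in C^{0,\alpha}$, the function $x_n$ is $\mathcal L$-caloric up to an error $O(|x|^{\alpha})$; adding a corrector $\psi$ with $\mathcal L\psi-\psi_t=-\sum_i a_{in}$ and $\psi=O(|x|^{1+\alpha})=o(d)$ produces an honest $\mathcal L$-caloric function $h_0=x_n+\psi$ that agrees with $x_n$ to first order, and ``linear barrier'' below always means a multiple of $h_0$. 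The dichotomy in the statement discards the case that $u$ grows faster than every linear function, so I may assume $\sup_{C_\rho}u\le K\rho$ for small $\rho$.

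\emph{Lower bound, case (a) ($B\subset D$).} Here $u>0$ is $\mathcal L$-caloric in $B$ and meets $F$ only at $(0,0)$. First I would apply the interior Harnack inequality along a Harnack chain inside $B$ reaching toward $\partial B$, together with the smoothness of $\partial B$, to get the parabolic Hopf bound $u(X)\ge c_0\,\mathrm{dist}(X,\partial B)$ for $X$ near $(0,0)$, hence $u\ge c_1x_n$ in a cone $\{x_n\ge\kappa d\}$ about $e_n$. To upgrade the constant to the sharp one I apply the boundary Harnack principle to the pair $u,h_0$ in $\Omega_{2r}$: it traps $u(X)/h_0(X)$, and letting $r\to0$ along the inward normal shows this ratio has a limit $\alpha^+:=\lim_{r\to0}u(re_n)/r$. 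Thus $u(X)=\alpha^+h_0(X)+o(d)=\alpha^+x_n+o(d)$ in the cone; outside the cone $\alpha^+x_n^+=O(\kappa d)$ while $u\ge0$, and letting $\kappa\to0$ gives $u\ge\ell^+ + o(d)$. (This also fixes the normalization of $(\alpha^+,\beta^+)$ among positive multiples representing the plane.)

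\emph{Upper bound, case (b) ($B\subset D^C$).} Now $u\equiv0$ on $\bar B$ near $(0,0)$. Fix small $\rho$ and let $w$ be $\mathcal L$-caloric in $(\{\ell>0\}\cap\{d<\rho\})\cap D$ with $w=u$ on the outer parabolic boundary and $w=0$ on $\partial B\cap\{d<\rho\}$; by the growth bound $w$ is bounded and $u\le w$ by the comparison principle. Since $w$ vanishes on the smooth surface $\partial B$, the same boundary Harnack comparison (now of $w$ against $h_0$) gives $w(X)=a\,x_n+o(d)$, and comparing $w$ with the multiple of $h_0$ that dominates it on the parabolic boundary --- this is where the touching ball and the growth bound force $a\le\alpha^+$ --- yields $u\le w\le\ell^+ + o(d)$. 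For the equality assertion I would approach $(0,0)$ along the family $t=-\gamma\langle x,\nu\rangle$, $\gamma>0$: on the normal ray $x=se_n$, $t=-\gamma s$, one has $x_n=d\,(1+o(1))$, so the matching lower and upper bounds both give $u=\alpha^+x_n+o(d)$ there, i.e.\ equality, and varying $\gamma$ sweeps out the stated family.

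\emph{Main obstacle.} The delicate part is the two comparisons against $h_0$: showing that $u/h_0$ (resp.\ $w/h_0$) genuinely has a limit rather than oscillating between comparable bounds, and that this limit is the geometric slope $\alpha^+$ read off from the tangent plane of $B$. This is exactly where the Carleson estimate and the backward Harnack inequality are needed on top of the plain interior Harnack, and where the Hölder regularity of the coefficients must be spent, through the corrector $\psi$ and the freezing $A(0,0)=I$, to keep the discrepancy between $u$ and a true linear profile at order $o(d)$. It is also precisely the feature separating this variable-coefficient setting from the classical $\mathcal L=\Delta$ one, where $D_\tau u$ solves the same equation and one can argue by differentiating the PDE directly.
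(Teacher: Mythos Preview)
The paper does not prove this lemma at all: it is quoted verbatim from \cite{FS1} (Lemma~3.5) and \cite{CS} (13.19), so there is no in-paper argument to compare against. That said, your sketch has a genuine gap that would prevent it from going through.

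The central step in both (a) and (b) is your application of the boundary Harnack principle to the pair $(u,h_0)$ (respectively $(w,h_0)$) in $\Omega_{2r}$. The boundary Harnack inequality stated in Section~3 requires \emph{both} functions to be nonnegative $\mathcal{L}$-caloric functions vanishing on the Lipschitz portion of $\partial\Omega_{2r}$, i.e.\ on $F=\{x_n=f(x',t)\}$. Your comparison function $h_0$ is (a correction of) $x_n$, which vanishes on the hyperplane $\{x_n=0\}$, not on $F$; nor does it vanish on $\partial B$. So the quotient $u/h_0$ is not governed by the boundary Harnack principle as stated, and you cannot conclude that it is trapped, let alone that it has a limit. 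You correctly flag the ``limit vs.\ oscillation'' issue as the main obstacle, but the proposed fix (Carleson estimate, backward Harnack, and the corrector $\psi$) does not address the mismatch of boundary data, and the tools in Section~3 only give one-sided control of ratios, not H\"older continuity of the quotient up to the boundary.

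Two smaller points. First, the corrector $\psi$ is unnecessary: since $\mathcal{L}=\sum a_{ij}D_{ij}$ has no first-order part and $D_{ij}x_n=0$, one has $(\mathcal{L}-\partial_t)x_n=0$ exactly, so $x_n$ is already $\mathcal{L}$-caloric. Second, in the equality clause the paper's ``paraboloids'' should be read as $t=-\gamma\langle x,\nu\rangle^2$ (cf.\ Lemma~\ref{lm:Prop_of_SupCon}); along a genuinely linear path $t=-\gamma\langle x,\nu\rangle$ one has $x_n=o(d)$ in the parabolic metric, so your equality computation there does not detect the coefficient $\alpha^+$. The standard route in \cite{FS1,CS} instead exploits the smoothness of $\partial B$: for (b) one replaces $u$ by an $\mathcal{L}$-caloric majorant vanishing on $\partial B$ and reads off the linear expansion from Schauder regularity at the smooth boundary, while for (a) one combines a Hopf barrier inside $B$ with a blow-up/compactness argument (freezing coefficients at the origin) to identify the optimal $\alpha^+$.
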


 \textbf{Remark:}  By applying Lemma \ref{lm:AD_for_u} to both $u^+$ and $(-u)^+$, where $u$ is our two-phase viscosity solution, we can infer that if the origin is a right regular point for $u^+$, and hence a left regular point for $(-u)^+$, then
 \[
 u(x,t) \geq (\beta^+t+\alpha^+(x,\nu))^+ - (\beta^-t+\alpha^-(x,\nu))^- +o(d(x,t)).
 \]
 Likewise, if the origin is regular from the left for $u$ we have the same statement with the inequality reversed.

 \begin{lm} \label{lm:Asym_Bndry_Con}
 Let $u$ be a viscosity solution to our free boundary problem in $Q_1$ with FB($u$) Lipschitz, $(0,0)\in $ FB($u$) and suppose that in a neighborhood of the origin with $t\leq 0$ we have for $\alpha^+ >0$, $\alpha^-\geq 0$
 \[
 u(x,t) \geq (\beta^+t + \alpha^+\langle x,\nu \rangle)^+ - (\beta^-t+ \alpha^-\langle x,\nu \rangle)^- +o(d(x,t)).
 \]
 Then $G(\alpha^+,\alpha^-) \leq 1$.

 Likewise, if for $\alpha^+ \geq 0$, $\alpha^-> 0$
 \[
 u(x,t) \leq (\beta^+t + \alpha^+\langle x,\nu \rangle)^+ - (\beta^-t+ \alpha^-\langle x,\nu \rangle)^- +o(d(x,t)).
 \] Then $G(\alpha^+,\alpha^-) \geq 1$.

 \end{lm}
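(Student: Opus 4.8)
The plan is to argue by contradiction and use the comparison principle that is built into the definition of viscosity solution. Suppose $G(\alpha^+,\alpha^-)=1+2\delta_0$ with $\delta_0>0$. I will construct a \emph{strict classical subsolution} $w$ of~\eqref{FBP_Statement} in a small space-time cylinder $Q$ about the origin, contained in $\{t\le 0\}$ so that the asymptotic development is available, with the property that $w<u$ on the parabolic boundary $\partial_p Q$ but $w\ge u$ at some interior point of $Q$. Since $u$ is a viscosity supersolution, $w<u$ on $\partial_p Q$ forces $w<u$ throughout $Q$, a contradiction. The second assertion is proved symmetrically: from $u\le P+o(d)$ with $P(x,t)=(\beta^+t+\alpha^+\langle x,\nu\rangle)^+-(\beta^-t+\alpha^-\langle x,\nu\rangle)^-$ one builds a strict classical supersolution $w$ with $G(w_\nu^+,w_\nu^-)<1$ and compares it with $u$ as a viscosity subsolution.

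The competitor $w$ should be a perturbation of the two-plane profile $P$. In the positive phase take $w=A^+$ with $A^+(x,t)=\beta^+t+(1+\mu)\alpha^+\langle x,\nu\rangle+N\alpha^+|x'|^2-\eta$, and in the negative phase take $w=A^-$, a matching affine-plus-quadratic profile with spatial slope close to $\alpha^-$, the zero sets of $A^+$ and $A^-$ being arranged to coincide near the origin along a paraboloid that bulges slightly in the $|x'|$-directions (the genuinely two-phase construction needs $\alpha^->0$; when $\alpha^-=0$ one uses the one-phase version $w=A^+\vee 0$, and the possible dead-core wedge when the two half-spaces of $P$ are not complementary is handled by placing $\fb(w)$ along the positive-side plane). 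Here $\mu,\eta>0$ are small and $N$ is a constant. Three things must be verified. (i) $\mathcal{L}w-w_t>0$ in $\Omega^\pm(w)$: since $P$ is affine one has $\mathcal{L}P=0$, and because $A(0,0)$ is the identity with $a_{ij}\in C^{0,\alpha}$ the coefficient oscillation on a cylinder of radius $r$ is $O(r^\alpha)$, while $\mathcal{L}(N\alpha^+|x'|^2)\ge 2N\alpha^+\lambda(n-1)$; hence choosing $N>\beta^+/(\alpha^+\lambda(n-1))$ and $r$ small makes the interior inequality strict in both phases. (ii) $G(w_\nu^+,w_\nu^-)>1$: along $\fb(w)$ near the origin $w_\nu^+=(1+\mu)\alpha^++O(r)$ and $w_\nu^-=\alpha^-+O(r)$, so by the monotonicity and Lipschitz continuity of $G$, $G(w_\nu^+,w_\nu^-)\ge G(\alpha^+,\alpha^-)-L_G(\mu\alpha^++O(r))=1+2\delta_0-O(\mu+r)>1$ once $\mu$ and $r$ are small relative to $\delta_0$. (iii) the boundary/interior splitting, described next.

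For (iii) I use the hypothesis in the form: on $\{P>0\}$, $u\ge \beta^+t+\alpha^+\langle x,\nu\rangle+o(d)$, while on $\{P<0\}$ the development bounds $u^-$ from above by $(\beta^-t+\alpha^-\langle x,\nu\rangle)^-+o(d)$. On $\{w>0\}$ — which the shift $-\eta$ keeps inside $\{P>0\}$ near the origin provided $\eta\gtrsim Nr^2+\mu r$ — this gives $u-w\ge -\mu\alpha^+\langle x,\nu\rangle+\eta-N\alpha^+|x'|^2+o(d)$, and on the negative phase $u-w\ge -\eta'+O(|x'|^2)+o(d)$ for the matching shift $\eta'$. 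The only term of unfavorable sign is $-\mu\alpha^+\langle x,\nu\rangle$, of order $\mu d$ as one moves a distance $d$ into $\Omega^+(u)$; balancing it against $\eta$ identifies a distance $d_\ast\sim \eta/(\mu\alpha^+)$ beyond which $w>u$ inside $\Omega^+(u)$. I then choose $Q$ and its shape so that $\partial_p Q$ meets $\Omega^+(u)$ only within distance $\ll d_\ast$ of $\fb(u)$ — where the $\eta$ and $\eta'$ terms dominate and $w<u$ — while $Q$ itself reaches distance $\gtrsim d_\ast$ into $\Omega^+(u)$, producing an interior point with $w\ge u$. Together with (i)--(ii), $w$ is then a strict classical subsolution in $Q$ with $w<u$ on $\partial_p Q$, and the comparison against $u$ yields the contradiction.

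The main obstacle is step (iii): carrying out the geometric bookkeeping honestly, that is, fixing the parameters $r,\mu,\eta,N$ and — crucially — the position and shape of $Q$ in the correct order, so that $\partial_p Q$ runs alongside $\fb(u)$ and never crosses into the part of $\Omega^+(u)$ where the extra slope $\mu\alpha^+$ lets $w$ overtake $u$, even though $Q$ still penetrates that part; here the equality-along-paraboloids information from Lemma~\ref{lm:AD_for_u} is what lets one orient $Q$ so that going backward in time along $\partial_p Q$ stays near the free boundary rather than moving into the bulk of the positive phase. All of this has to be done while the error in the asymptotic development is only $o(d)$, with no quantitative modulus, so that every comparison must be closed in a neighborhood small enough that $o(d)$ is beaten by the fixed gaps $\eta,\eta'$. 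A secondary difficulty, intrinsic to the variable-coefficient operator, is that $\mathcal{L}w-w_t>0$ and $G(w_\nu^+,w_\nu^-)>1$ must hold on one and the same small scale $r$; one checks these are compatible because the required lower bound on $N$ is a fixed constant while the margin $2\delta_0-O(\mu+r)$ in the free-boundary condition degrades only linearly.
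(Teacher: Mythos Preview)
Your step~(iii) has a genuine gap that cannot be repaired within the framework you set up. The hypothesis is \emph{one-sided}: you only know $u(x,t)\ge P(x,t)+o(d)$, where $P$ is the two-plane profile. To reach a contradiction you need an interior point of $Q$ where $w\ge u$. But increasing the positive-phase slope to $(1+\mu)\alpha^+$ only makes $w$ slightly larger than $P$; it says nothing about $w$ relative to $u$, since $u$ could exceed $P$ by an arbitrary amount. The inequality you write, $u-w\ge -\mu\alpha^+\langle x,\nu\rangle+\eta-N\alpha^+|x'|^2+o(d)$, is a \emph{lower} bound on $u-w$; the right side becoming negative at distance $d_\ast$ does not force $u-w\le 0$ there. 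So the ``distance $d_\ast$ beyond which $w>u$'' does not exist from the information available, and no choice of the shape of $Q$ or appeal to the paraboloid equality in Lemma~\ref{lm:AD_for_u} (which is not part of the hypotheses here) rescues this.

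The paper proceeds by perturbing in the \emph{opposite} direction and by rescaling. It takes $\bar\alpha^+=\alpha^+-\varepsilon$ and $\bar\alpha^-=\alpha^-+\varepsilon$ so that the comparison function $\phi$ lies \emph{below} $P$ near the origin, and it blows up via $u_r(x,t)=r^{-1}u(rx,rt)$, which turns the $o(d)$ error into $o_r(1)$ on a \emph{fixed} neighborhood $R$. Then $u_r>\phi$ strictly on all of $\partial_p R$ for $r$ small, with no geometric tailoring of $R$ required; the assumption $G(\alpha^+,\alpha^-)\ge 1+\eta$ survives the small perturbation so $\phi$ is still a strict classical subsolution; and the contradiction is immediate at the origin, where $u_r(0,0)=0=\phi(0,0)$. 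The only point at which $u$ is known exactly is the origin, so the touching must occur there --- this is what forces the slope perturbation to go downward, not upward. The rescaling is also the clean resolution of the $o(d)$ issue you flagged as a secondary obstacle.
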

\begin{proof}We prove the first statement.
Suppose that the conclusion of the lemma is false. Then there exists an $\eta >0$ such that $G(\alpha^+,\alpha^-) \geq 1+\eta >1$. We can assume that $\nu =e_n$ for simplicity.

Let $R$ be a small parabolic neighborhood of the origin. Set
\[
\psi(x,t) = \bar{\alpha}^+x_n +{\beta}^+t -ct^2 +\frac{2\Lambda}{\lambda} x_n^2 -\frac{|x'|^2}{2(n-1)}
\] where $\bar{\alpha}^+ = \alpha-\ep$ with $\ep$ to be determined later.

Choose $c>0$ large so that the level surface $\{\psi=0\}$ is strictly convex and $\{\psi>0\}\cap R \subset \Omega^+(u)$

Let $\mathcal{L}_r$ =$\frac{1}{r}\sum_{i,j} a_{ij}(rx,rt)$. Observe that
\[
\begin{aligned}
\mathcal{L}_r\psi -\psi_t &= \frac{1}{r}\sum_{i,j} a_{ij}D_{ij}\psi -[{\beta}^+-2ct] \\
&= \frac{1}{r}\left[-\frac{1}{n-1}(a_{11} +a_{22} +...+a_{n-1,n-1})+\frac{2\Lambda}{\lambda}a_{nn}\right]-[{\beta}^+-2ct]\\
&\geq  \frac{\Lambda}{r} -[{\beta}^+-2ct]\\
&>0
\end{aligned}
\] provided $r$ is small enough.\\

\textbf{Claim:} \textit{If $\varepsilon$ and $R$ are small enough, then the function
\[
\phi =\psi^+ -\frac{\bar{\alpha}^-}{\bar{\alpha}^+}\psi^-,
\] with $\bar{\alpha}^- =\alpha^- +\varepsilon$, is a classical strict $\mathcal{L}_r$-subsolution in R}.

To prove the claim, note that by the observation above, if $r$ is small enough, $\phi$ will satisfy the subsolution condition away from its free boundary. So we only need to establish the free boundary condition $G$.

If $\ep$ is sufficiently small, the continuity of $G$ and our assumption that $G(\alpha^+,\alpha^-)\geq 1+\eta$ implies that
\[
G(\phi^+_n,\phi^-_n) \geq G(\alpha^+,\alpha^-)-\eta/2 \geq 1+\eta/2.
\]
Using the continuity of $\phi$ and $G$ we may assume that the subsolution condition holds throughout $R$, assuming this region is small enough. That is, along the free boundary of $\phi$ in $R$ we have
\[
G(\phi^+_\nu,\phi^-_\nu)) \geq 1+\eta/4 >1.
\] 
  This means that $\phi$ is a classical strict subsolution in $R$. Note that this choice of $R$ does not depend on $r$.

 Now define
 \[
 u_r(x,t) =\frac{u(rx,rt)}{r}.
 \]
Then $u_r$ is a viscosity $\mathcal{L}_r$-solution. Furthermore,
\[
u_r(x,t) \geq (\beta^+t + \alpha^+\langle x,\nu \rangle)^+ - (\beta^-t+ \alpha^-\langle x,\nu \rangle)^- +o_r(1)
\]
where $o_r(1)$ denotes the decay of the error term as a function of $r$. If we choose $r$ to be very small, we will then have
\[
u_r(x,t) \geq (\beta^+t + \alpha^+\langle x,\nu \rangle)^+ - (\beta^-t+ \alpha^-\langle x,\nu \rangle)^- +o_r(1) >\phi(x,t)
\] on the boundary of our neighborhood $R$.  This strict inequality ought to propagate to the interior since $u_r$ is a viscosity solution and $\phi$ a subsolution, but we have $u_r(0,0)=0=\phi(0,0)$ a contradiction.
\end{proof}

 \begin{lm}\label{lm:Prop_of_SupCon}
Let $u$ be a viscosity solution to our free boundary problem with a Lipschitz \fb. Define $u_1= u(p-\lambda\varepsilon\tau)$
\[
v_\eta(x,t) = \sup_{B_{\sigma\phi_\eta}} u_1
\] where $\phi_\eta$ is as Lemma~\ref{lm:FamilySubSlns}. Assume that this sup is attained uniformly away from the top and bottom of the ball. Then the following hold:
\begin{enumerate}
\item $v_\eta$ is a subsolution to our equation on $\Omega^+(v_\eta)$ and $\Omega^-(v_\eta)$.
\item All points on $\fb(v_\eta)$ are regular from the right.
\item $\fb(v_\eta)$ is uniformly Lipschitz in space and time
\item If $(x_0,t_0) \in \fb(v_\eta)$ and $(y_0,s_0)\in \fb(u_1)$ with
\[
(y_0,s_0) \in \partial B_{\sigma\phi_\eta(x_0,t_0)}(x_0,t_0)
\]
then $(x_0,t_0)$ is a regular point from the right. Moreover if near $(y_0,s_0)$ along the paraboloid $s=s_0-\gamma\langle y-y_0,\nu \rangle^2$, ($\gamma>0$), $u_1$ has the asymptotic expansion
\[
u_1(y,s) = \alpha^+\langle y-y_0,\nu\rangle^+ -\alpha_-\langle y-y_0,\nu\rangle^+ +o(|y-y_0|)
\] $\nu = \frac{y_0-x_0}{|y_0-x_0|}$, then near $(x_0,t_0)$ along the paraboloid $t=t_0-\gamma\langle x-x_0,\nu \rangle^2$ we have
\[
\begin{aligned}
v_\eta \geq &\alpha_+(x-x_0,\nu + \frac{\sigma \phi_\eta(x_0,t_0)}{|y_0-x_0|}\nabla(\sigma\phi_\eta))^+\\
&-\alpha_-(x-x_0,\nu + \frac{\sigma \phi_\eta(x_0,t_0)}{|y_0-x_0|}\nabla(\sigma\phi_\eta))^- +o(|x-x_0|).
\end{aligned}
\]
\end{enumerate}
\end{lm}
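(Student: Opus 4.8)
The plan is to handle the four assertions in order, since each one feeds into the next. For (1), I would invoke the construction already recalled in Remark (3) following Lemma~\ref{lm:FamilySubSlns}: the defining inequality (b), $\phi_\eta(\mathcal{L}\phi_\eta - D_t\phi_\eta - |\nabla\phi_\eta|) \geq C(|\nabla\phi_\eta|^2 + \omega_0^2)$, is precisely the condition isolated in \cite{FS1} guaranteeing that the sup-convolution $v_\eta(p) = \sup_{B_{\sigma\phi_\eta(p)}(p)} u_1$ is an $\mathcal{L}$-subsolution in $\Omega^+(v_\eta)$ and in $\Omega^-(v_\eta)$. The only thing to check beyond citing this is that the computation goes through for the variable-coefficient operator $\mathcal{L}_r$ with oscillation $\omega_0$ absorbed into the right-hand side of (b); the hypothesis that the supremum is attained uniformly away from the top and bottom of the ball is exactly what makes the standard second-order touching argument for sup-convolutions valid. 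For (2), the point is geometric: if $(x_0,t_0)\in\fb(v_\eta)$, then the ball $B_{\sigma\phi_\eta(x_0,t_0)}(x_0,t_0)$ touches $\fb(u_1)$ from inside $\Omega^+(u_1)$, and since $v_\eta$ is defined by a sup over a ball of strictly positive radius, a slightly larger ball around a nearby interior center lies in $\Omega^+(v_\eta)$; more carefully, one shows that the graph of $p\mapsto\sigma\phi_\eta(p)$ produces a tangent ball to $\fb(v_\eta)$ contained in $\Omega^+(v_\eta)$, which is the definition of a right regular point.

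For (3), uniform Lipschitz regularity of $\fb(v_\eta)$ in space and time, I would argue that $\fb(v_\eta)$ is the image of $\fb(u_1)$ under the map $x\mapsto x - \sigma\phi_\eta(x)\nu(x)$ (roughly, translating each free boundary point of $u_1$ inward by the radius of the attaining ball), combined with the fact that $\fb(u_1)$ is a translate of the Lipschitz graph $\fb(u)$; the estimates (c), $|\nabla\phi_\eta|\leq c\varepsilon^{\beta-1}$ and $|D_t\phi_\eta|\leq c\varepsilon^{-\tilde\alpha}$, control the perturbation, and after the appropriate parabolic rescaling these produce a Lipschitz constant that is uniform in $\eta$ — I would want to track that the constant depends only on $n$, $L$, $L_0$ and not on $\varepsilon$ through the rescaling, which is where (c) is used with the right powers. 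The continuity in $\eta$ noted in Remark (2) after Lemma~\ref{lm:FamilySubSlns} ensures uniformity across the family.

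Assertion (4) is the substantive one and where I expect the main difficulty. Given $(x_0,t_0)\in\fb(v_\eta)$ and the attaining point $(y_0,s_0)\in\fb(u_1)$ on $\partial B_{\sigma\phi_\eta(x_0,t_0)}(x_0,t_0)$, with $\nu = \frac{y_0-x_0}{|y_0-x_0|}$, I would first use part (2) together with Lemma~\ref{lm:AD_for_u}(a) (applied to $v_\eta$, which is touched from inside $\Omega^+(v_\eta)$ by a ball) to get a lower asymptotic development for $v_\eta$ at $(x_0,t_0)$ of the form $[\beta t + \alpha\langle x-x_0, \mu\rangle]^+ + o(d)$. The content is to identify the coefficients $\alpha, \mu$ in terms of the given expansion of $u_1$ at $(y_0,s_0)$. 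The mechanism: for $p$ near $(x_0,t_0)$, $v_\eta(p)\geq u_1(q)$ where $q = p + \sigma\phi_\eta(p)\nu'$ for any unit $\nu'$, and optimizing/linearizing $\phi_\eta$ via $\phi_\eta(p) = \phi_\eta(x_0,t_0) + \nabla\phi_\eta\cdot(p-x_0) + \dots$ sends the inner normal $\nu$ at the touching point to the effective direction $\nu + \frac{\sigma\phi_\eta(x_0,t_0)}{|y_0-x_0|}\nabla(\sigma\phi_\eta)$, with the factor $\frac{\sigma\phi_\eta(x_0,t_0)}{|y_0-x_0|}$ coming from differentiating the constraint that $q$ stays on $\partial B_{\sigma\phi_\eta(p)}(p)$ (note $|y_0-x_0| = \sigma\phi_\eta(x_0,t_0)$, so this factor is essentially $1$, but it is kept in this form for bookkeeping in later steps). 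Substituting the paraboloid $s = s_0 - \gamma\langle y-y_0,\nu\rangle^2$ into the expansion of $u_1$ and transporting it along the corresponding paraboloid $t = t_0 - \gamma\langle x-x_0,\nu\rangle^2$ for $v_\eta$ — using that along such paraboloids the quadratic-in-time contributions are $o(|x-x_0|)$ and so do not affect the linear development — yields the claimed inequality with coefficients $\alpha^+$, $\alpha^-$ carried over unchanged. The delicate points I would be most careful about are: (i) justifying that the sup is attained in a direction close to $-\nu$ uniformly (needed so the linearization of $\phi_\eta$ is legitimate and the error is genuinely $o(|x-x_0|)$ rather than swamped by the large gradient bound $\varepsilon^{\beta-1}$), which is why the hypothesis on uniform attainment away from the caps matters; (ii) the negative phase $-\alpha_-\langle\cdot\rangle^-$ — one must check that the sup-convolution does not destroy the lower bound on the negative side, which uses $\alpha^- \geq 0$ and a separate touching argument for $(-v_\eta)^+$ analogous to the Remark after Lemma~\ref{lm:AD_for_u}; and (iii) matching the time coefficients $\beta^\pm$, which I would argue transform consistently because the paraboloid parametrization is built precisely to absorb the $t$-dependence, so only the spatial direction is modified.
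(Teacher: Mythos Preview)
Your treatment of (1)--(3) is essentially the paper's: cite \cite{FS1} for (1) and standard sup-convolution geometry for (2)--(3). For (4) your overall strategy---pick a specific competitor $(y,s)$ in the ball $B_{\sigma\phi_\eta(x,t)}(x,t)$, use $v_\eta(x,t)\geq u_1(y,s)$, linearize, and substitute into the expansion of $u_1$---is also the paper's. However, there is a genuine gap in how you propose to execute it.

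You assert that $|y_0-x_0|=\sigma\phi_\eta(x_0,t_0)$ and hence that the factor $\frac{\sigma\phi_\eta(x_0,t_0)}{|y_0-x_0|}$ is ``essentially $1$, kept for bookkeeping.'' This is false: $(y_0,s_0)$ lies on the boundary of a \emph{space-time} ball, so $|y_0-x_0|^2+(s_0-t_0)^2=(\sigma\phi_\eta(x_0,t_0))^2$ and in general $|y_0-x_0|<\sigma\phi_\eta(x_0,t_0)$ strictly. The factor is therefore $>1$ whenever $s_0\neq t_0$, and it is precisely this factor that the statement records. If you carry out your linearization of $\sigma\phi_\eta$ directly as written (taking $q=p+\sigma\phi_\eta(p)\nu$), you will arrive at the wrong coefficient. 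The paper's device is to introduce the spatial-slice radius
\[
\bar\phi(x)=\sqrt{(\sigma\phi_\eta(x,t))^2-(s_0-t_0)^2},
\]
set $y=x+\nu\,\bar\phi(x)$, and then place $s$ on the paraboloid for $u_1$. One has $\bar\phi(x_0)=|y_0-x_0|$ and, by the chain rule,
\[
\nabla\bar\phi\big|_{x_0}=\frac{\sigma\phi_\eta(x_0,t_0)}{|y_0-x_0|}\,\nabla(\sigma\phi_\eta)(x_0,t_0),
\]
which is exactly where the factor comes from. The hypothesis that the supremum is attained uniformly away from the top and bottom of the ball is what guarantees $\bar\phi(x_0)=|y_0-x_0|$ is bounded away from zero, so that this gradient is controlled. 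Once you replace your direct linearization by the $\bar\phi$ computation, the substitution into $\langle y-y_0,\nu\rangle$ gives the claimed expansion immediately; there is no need to invoke Lemma~\ref{lm:AD_for_u} for $v_\eta$ separately or to run a parallel touching argument for the negative phase.
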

 \begin{proof}
 (1) is proved in \cite{FS1}, Lemma 3.1.\\
 (2) \& (3) are standard facts. See Lemma 9.13 in \cite{CS}.\\
 (3) For $x$ near $x_0$ take $t$ to be on the corresponding paraboloid. Set $y=x+\nu\bar{\phi}(x)$ (we suppress the $\eta$ subscript for convenience; the result is to hold for any choice of $\eta$) where
 \[
 \bar{\phi}(x) = \sqrt{\phi^2(x,t)-(s_0-t_0)^2} \leq \phi(x,t).
 \]
 Note that $t$ depends on $x$ here since $(x,t)$ is to lie on the paraboloid. Given this $y$, let $s$ be the corresponding time value so that $(y,s)$ lies on the paraboloid for $u_1$. Then $v_\eta(x,t) \geq u_1(y,s)$ since $(y,s)$ is in the ball over which we are taking the sup.

 Since
\[
\nabla \bar{\phi} \mid_{x_0} =\frac{\phi(x_0,t_0)}{\bar{\phi}(x_0)}\nabla \phi (x_0,t_0).
\]
and $\bar{\phi}(x_0)=|x_0-y_0|$, we can write $\bar{\phi}$ as
 \[
 \bar{\phi}(x) = \bar{\phi}(x_0) +\langle x-x_0,\frac{\phi(x_0,t_0)}{|y_0-x_0|}\nabla \phi(x_0,t_0) \rangle +o(|x-x_0|).
 \]

 Now we compute
 \[
 \begin{aligned}
 \langle y-y_0,\nu \rangle = &\langle x-x_0+(\bar{\phi}(x)-\bar{\phi}(x_0))\nu,\nu \rangle +o(|x-x_0|) \\
  = & \langle x-x_0+(\langle x-x_0,\frac{\phi(x_0,t_0)}{|y_0-x_0|}\nabla \phi(x_0,t_0) \rangle)\nu,\nu \rangle +o(|x-x_0|)\\
  = & \langle x-x_0, \nu \rangle + (\langle x-x_0,\frac{\phi(x_0,t_0)}{|y_0-x_0|}\nabla \phi(x_0,t_0) \rangle) \langle \nu, \nu \rangle +o(|x-x_0|)\\
  = & \langle x-x_0,\nu +\frac{\phi(x_0,t_0)}{|y_0-x_0|}\nabla \phi(x_0,t_0) \rangle +o(|x-x_0|).
 \end{aligned}
 \]
 Now substitute this result into the asymptotic behavior of $u_1$ and use the fact that $v_\eta(x,t_0) \geq u_1(y,s_0)$ to reach the desired conclusion.

 \end{proof}
 \textbf{Remark} The condition on the sup being attained uniformly away from the top and bottom of the ball is implicitly used to ensure that $|x_0-y_0|$ is bounded away from zero in the above computations. The condition is not restrictive; for a solution with Lipschitz free boundary satisfies it near the FB, and by rescaling, one may assume that it occurs throughout $C_1$. \\

 \section{Monotonicity up to the Free Boundary}
Thus far in this work we have used $\varepsilon_0$-monotone in a direction $\tau$ to mean that for any $\ep \geq \ep_0$
 \[
 u(p) -u(p-\varepsilon \tau) \geq c\varepsilon^{\bar{\beta}} u(p).
 \] 
 At this point however, it is more convenient to work with a formulation of $\ep_0$-monotonicity more compatible with our sup-convolutions $v_\eta$. In this formulation, we say that $u$ is $\ep_0$-monotone in a cone of directions $\Gamma(\theta,e_n)$ if 
 \[
 \sup_{B_{\varepsilon\sin\delta}(p)} u(q-\varepsilon\tau) \leq u(p)
 \]
  for any $\tau \in \Gamma(\theta -\delta,e_n)$ and $\ep\geq \ep_0$. Here $\delta = \pi/2 -\theta$ is the defect angle of the cone; if it is zero the cone would be a half-space. Throughout all our work we assume that $\delta \ll \theta$. 
  The two formulations of $\ep$-monotonicity are essentially equivalent, with perhaps a slight difference in $\ep$, so our previous results translated into this formulation become: For $\tau \in \Gamma(\theta-\delta,e_n)$ and $\varepsilon\geq \varepsilon_0$
  \[ 
 \sup_{B_{\varepsilon\sin\delta}(p)} u(q-\varepsilon\tau) \leq (1-c\varepsilon^{\bar{\beta}}) u(p).
 \]

  Our goal in what follows is to decrease $\varepsilon$ by a factor of $\lambda<1$. Let $\sigma =\varepsilon(\sin\delta -(1-\lambda))$. Now if $\lambda$ is close enough to one 
  \[
  B_\sigma (p -\lambda\varepsilon\tau) \subset B_{\varepsilon\sin\delta}(p-\varepsilon\tau).
  \]
  This means
 \[
 \sup_{B_\sigma(p)} u(q-\lambda\varepsilon\tau) \leq (1-c\varepsilon^{\bar{\beta}}) u(p).
 \]
 This has decreased $\varepsilon$ but at the expense of reducing the ball's radius. 
 
 However, away from the free boundary at a distance of $(M\varepsilon)^\gamma$, by Lemma~\ref{ln:FullMonoAway} our solution $u$ is in fact fully monotone. So away from the free boundary there is no need to give up any of the radius and we have by the definition of monotone
 \[
 \sup_{B_{\varepsilon\lambda\sin\delta}(p)} u(q-\lambda\varepsilon\tau) \leq u(p).
 \] 
 We will use the variable family of radii $v_\eta$ to bridge these two conclusions. Recall $\tilde{\alpha}$ occurs in the definition of the $\phi_\eta$ and $\bar{\beta}=1-\gamma+\beta$ is defined in Section 2.\\

\begin{thrm} Let $u$ be a solution to our FBP in $C_{R,T} =B'_R \times (-T,T)$ such that
\begin{enumerate}
\item[(i)] u is monotonically increasing along the directions of a spacial cone $\Gamma^x(\theta,e_n)$ with $\pi/2 -\theta \ll 1$.
\item[(ii)] u is $\varepsilon$-monotone in a space time cone of directions $\Gamma(\theta_*, e_n)$
\item[(iii)] u is monotone along the directions $\tau \in \Gamma(\theta_*,e_n)$ outside an $\varepsilon$-neighborhood of $\fb(u)$.
\item[(iv)] The non-degeneracy condition holds for $u$ at regular points from the right.
\end{enumerate}
Then there exists an $\varepsilon_0$ and a $\lambda$, $0<\lambda <1$ such that if $\varepsilon\leq \varepsilon_0$ we have in $C_{R-c\varepsilon^{\bar{\alpha}},T-c\varepsilon^{\bar{\alpha}}} $ $u$ is $\lambda\varepsilon$-monotone in $\Gamma(\theta_*-\bar{c}\varepsilon^\beta,e_n)$.
\end{thrm}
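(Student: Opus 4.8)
\emph{Strategy.} The point is that the asserted $\lambda\ep$-monotonicity of $u$ is, in the sup-convolution formulation, nothing but the inequality $v_1\le u$ for a suitable family of sup-convolutions, so the plan is to construct that family and push a comparison argument through the whole parameter range. Fix a direction $\tau\in\Gamma(\theta_*-\delta-2\bar c\ep^\beta,e_n)$, where $\delta=\pi/2-\theta_*$ (this is the set of directions over which ``$\lambda\ep$-monotone in $\Gamma(\theta_*-\bar c\ep^\beta,e_n)$'' is to be read), set $u_1(q)=u(q-\lambda\ep\tau)$, $\sigma=\ep(\sin\delta-(1-\lambda))$, and $v_\eta(p)=\sup_{B_{\sigma\phi_\eta(p)}(p)}u_1$ on the tube $\Omega_{\ep,R,T}$ around $\fb(u)$, with $\phi_\eta$ the family from Lemma~\ref{lm:FamilySubSlns} (applied with $\omega_0$ the oscillation of $A$). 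The constants $\lambda<1$ (to be chosen near $1-\tfrac12\sin\delta$), $\bar c$ and $\bar\alpha\le\tilde\alpha/4$ will be fixed at the very end so that the parabolic-boundary constraint and the ``gain'' constraint below are simultaneously met. Since $\phi_0\le 1-\omega_0<1$, the choice of $\sigma$ gives $B_{\sigma\phi_0(p)}(p-\lambda\ep\tau)\subset B_{\ep\sin\delta}(p-\ep\tau)$, so hypothesis (ii) in its quantitative sup-convolution form (and the analogous statement for $-u^-$) yields the base case $v_0\le(1-c\ep^{\bar\beta})u$ on $\{u\ne0\}$ and $v_0\le u$ everywhere, on a cylinder slightly smaller than $C_{R,T}$.

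\emph{Continuation in $\eta$.} Consider $S=\{\eta\in[0,1]:v_\eta\le u\text{ on }\overline\Omega_{\ep,R,T}\}$. It contains $0$, and it is closed because $\phi_\eta$, hence $v_\eta$, depends continuously (indeed Lipschitz-ly) on $\eta$ (Remark (2) after Lemma~\ref{lm:FamilySubSlns}). It is open provided one shows that $v_{\eta_0}\le u$ already forces $v_{\eta_0}<u$ on the compact set $\overline\Omega_{\ep,R,T}$; for then, by compactness and continuity in $\eta$, $v_\eta\le u$ for $\eta$ near $\eta_0$. So one rules out a touching point $p_0$ with $v_{\eta_0}(p_0)=u(p_0)$, in three cases. (a) On $\partial_p\Omega_{\ep,R,T}$: near $t=-T$ and near $|x'|=R$, (e) of Lemma~\ref{lm:FamilySubSlns} gives $\phi_{\eta_0}\le1$, so $v_{\eta_0}\le v_0\le(1-c\ep^{\bar\beta})u<u$ by the base case; on the lateral sheets of the tube, which lie farther than $\ep$ from $\fb(u)$, hypothesis (iii) gives full monotonicity in $\Gamma(\theta_*,e_n)$, and $B_{(2-\omega_0)\sigma}(p-\lambda\ep\tau)\subset p-\Gamma(\theta_*,e_n)$ once $(2-\omega_0)\sigma\le\operatorname{dist}(\lambda\ep\tau,\partial\Gamma(\theta_*,e_n))$ --- this is the first radius constraint. (b) Interior touching with $v_{\eta_0}$ and $u$ of the same sign: by Lemma~\ref{lm:Prop_of_SupCon}(1) together with property (b) of Lemma~\ref{lm:FamilySubSlns}, $v_{\eta_0}$ is a \emph{strict} $\mathcal L$-subsolution there, so $u-v_{\eta_0}$ is a nonnegative strict supersolution of $\mathcal L-\partial_t$ with an interior zero, impossible by the strong (Hopf) maximum principle. (c) $p_0\in\fb(v_{\eta_0})\cap\fb(u)$: the crux, treated next. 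Once (c) is excluded, $\fb(v_{\eta_0})\cap\fb(u)=\emptyset$, hence $v_{\eta_0}<u$ throughout, $S=[0,1]$, and in particular $v_1\le u$.

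\emph{Case (c).} Since $\{v_{\eta_0}>0\}\subset\{u>0\}$ and $p_0$ is right regular for $v_{\eta_0}$ (Lemma~\ref{lm:Prop_of_SupCon}(2)), the interior ball realizing this works for $u$ as well, so $p_0$ is right regular for $u$ and the two free boundaries are tangent at $p_0$ with a common spatial normal $\nu$. By Lemma~\ref{lm:AD_for_u}(a) and its Remark, $u$ has the one-sided asymptotic development $u\ge(\beta_u^+t+\alpha_u^+\langle x-x_0,\nu\rangle)^+-(\beta_u^-t+\alpha_u^-\langle x-x_0,\nu\rangle)^-+o(d)$ near $p_0$ (an equality along a paraboloid, $u$ being a solution), so that $G(\alpha_u^+,\alpha_u^-)\le1$ by Lemma~\ref{lm:Asym_Bndry_Con} while hypothesis (iv) forces $\alpha_u^+\ge m'>0$. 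Let $(y_0,s_0)\in\fb(u_1)$ realize the supremum $v_{\eta_0}(p_0)=0$ on $\partial B_{\sigma\phi_{\eta_0}(p_0)}(p_0)$; since that supremum is $0$, the ball lies in $\overline{\Omega^-(u_1)}$, so $(y_0,s_0)$ is left regular for $u_1$, which therefore inherits from $u$ a genuine development there, and Lemma~\ref{lm:Prop_of_SupCon}(4) transports it to a development of $v_{\eta_0}$ at $p_0$ whose slopes are those of $u_1$ at $(y_0,s_0)$ multiplied by $|\nu+\tfrac{\sigma\phi_{\eta_0}(p_0)}{|y_0-x_0|}\nabla(\sigma\phi_{\eta_0})|$. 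Using (c)--(d) and especially (f) of Lemma~\ref{lm:FamilySubSlns} (the balls grow genuinely, by a factor $1+c\ep^\beta$, toward the interior region where $\fb(u)$ lies) this multiplier exceeds $1$ by $\sim\ep^\beta$; and the boundary Harnack principle together with the monotonicity hypotheses (i)--(ii) relate the slope of $u$ at $(y_0,s_0)-\lambda\ep\tau$ to $\alpha_u^+$ at $p_0$ up to a $(1-c\ep^{\bar\beta})$-factor. Since $\bar\beta>\beta$, this makes $\alpha^+_{v_{\eta_0}}>\alpha^+_u$ at $p_0$ --- equivalently, $G(\alpha^+_{v_{\eta_0}},\alpha^-_{v_{\eta_0}})>1$ --- while $v_{\eta_0}\le u$ with tangent free boundaries forces, on comparing the developments along the common paraboloid, $\alpha^+_{v_{\eta_0}}\le\alpha^+_u$ and $\alpha^-_{v_{\eta_0}}\ge\alpha^-_u$, hence $G(\alpha^+_{v_{\eta_0}},\alpha^-_{v_{\eta_0}})\le G(\alpha^+_u,\alpha^-_u)\le1$; contradiction. \emph{This step is the main obstacle}: quantifying that the variable-radius sup-convolution strictly raises the positive slope at a free-boundary contact, and matching up the two distinct contact points $p_0$ and $(y_0,s_0)-\lambda\ep\tau$ (through the boundary Harnack principle and the monotonicity hypotheses) so the gain survives; hypothesis (iv) is exactly what prevents the gain from degenerating, since it keeps $\alpha_u^+$ away from $0$.

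\emph{Reading off the conclusion.} In $C_{R-c\ep^{\bar\alpha},T-c\ep^{\bar\alpha}}$: at points of the tube, (f) of Lemma~\ref{lm:FamilySubSlns} gives $\sigma\phi_1\ge(2-\omega_0-c\ep^\beta)\sigma$, so $v_1\le u$ reads $\sup_{B_{(2-\omega_0-c\ep^\beta)\sigma}(p-\lambda\ep\tau)}u\le u(p)$; at points outside the tube one is farther than $\ep$ from $\fb(u)$, hypothesis (iii) applies, and $B_{\lambda\ep\sin(\delta+\bar c\ep^\beta)}(p-\lambda\ep\tau)\subset p-\Gamma(\theta_*,e_n)$ precisely because $\tau\in\Gamma(\theta_*-\delta-2\bar c\ep^\beta,e_n)$. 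Imposing the second radius constraint $\lambda\ep\sin(\delta+\bar c\ep^\beta)\le(2-\omega_0-c\ep^\beta)\sigma$ and observing that it is compatible with the first exactly when $\bar c\gtrsim\tan\delta$ and $\lambda$ lies in a short interval about $1-\tfrac12\sin\delta$ (fix such $\lambda$ and $\bar c$, both independent of $\ep$, and then take $\ep_0$ small enough to absorb the $O(\ep^\beta)+O(\omega_0)$ errors), one obtains $\sup_{B_{\lambda\ep\sin(\delta+\bar c\ep^\beta)}(p-\lambda\ep\tau)}u\le u(p)$ throughout $C_{R-c\ep^{\bar\alpha},T-c\ep^{\bar\alpha}}$ for every admissible $\tau$, which is $\lambda\ep$-monotonicity of $u$ in $\Gamma(\theta_*-\bar c\ep^\beta,e_n)$; the accompanying $c\ep^{\bar\beta}$-gap is recovered as in the base case (or from the Carleson estimate and boundary Harnack principle).
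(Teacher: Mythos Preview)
Your overall architecture (continuation in $\eta$ for a family of sup-convolutions, with the crux at a free-boundary contact) is the same as the paper's, but there is a genuine gap at exactly the point you flag as ``the main obstacle''. In the paper the sup-convolution $v_\eta$ is \emph{not} compared to $u$ directly; one first perturbs it to
\[
\bar v_\eta \;=\; v_\eta \;+\; c\,\ep^{1+\beta-\gamma}\,w_\eta,
\]
where $w_\eta$ solves $\mathcal L w_\eta-(w_\eta)_t=0$ in $\Omega^+(v_\eta)\cap\Omega_{\ep,R,T}$, vanishes on $\fb(v_\eta)$, and equals $u$ on the rest of the parabolic boundary. The role of $w_\eta$ is precisely to supply the strict gain in the positive slope at a contact point, via the boundary Harnack principle (this is where hypothesis (iv) is really used: non-degeneracy of $u$, hence of $v_\eta$, forces $(w_\eta)_{\nu^*}\ge c>0$). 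Without this additive term one only gets, from Lemma~\ref{lm:Prop_of_SupCon}(4),
\[
\alpha^+_{v_\eta}\;=\;\alpha_+\,|\nu^*|,\qquad \nu^*=\nu+\frac{\sigma\phi_\eta(x_0,t_0)}{|y_0-x_0|}\,\nabla(\sigma\phi_\eta),
\]
and $|\nu^*|$ can only be bounded \emph{below} by $1-c\sigma^2|\nabla\phi_\eta|$. Your claim that property~(f) of Lemma~\ref{lm:FamilySubSlns} makes this multiplier exceed $1$ by $\sim\ep^\beta$ is a misreading: (f) controls the \emph{value} of $\phi_\eta$ in the interior region, not the sign of $\langle\nu,\nabla\phi_\eta\rangle$; there is no reason for $\nabla\phi_\eta$ to point along $\nu$ at the contact, so the slope of $v_\eta$ alone need not strictly exceed $\alpha_+$, and $G(\alpha^+_{v_\eta},\alpha^-_{v_\eta})>1$ does not follow.

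Your proposed substitute --- using the boundary Harnack principle and monotonicity to relate the slope of $u$ at $(y_0,s_0)-\lambda\ep\tau$ to $\alpha_u^+$ at $p_0$ within a $(1-c\ep^{\bar\beta})$-factor --- is not how the argument runs and is not substantiated: those are two distinct free-boundary points with possibly different normals, and the paper never compares them directly. What the paper does is use that $(y_0,s_0)$ is left regular for $u_1$ to get $G(\alpha_+,\alpha_-)\ge 1$ there, transport $(\alpha_+,\alpha_-)$ to $(x_0,t_0)$ via Lemma~\ref{lm:Prop_of_SupCon}(4), and then \emph{add} the $w_\eta$ contribution to obtain $\bar\alpha_+ \ge \alpha_+(1-c\sigma^2|\nabla\phi_\eta|)+c\ep^{1+\beta-\gamma}>\alpha_+$; Hopf on $u-\bar v_\eta\ge 0$ then gives $\alpha_+^{(2)}>\bar\alpha_+$ and the contradiction with $G(\alpha_+^{(2)},\alpha_-^{(2)})\le 1$. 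In short, the missing ingredient is the auxiliary caloric function $w_\eta$; once you insert it (and run the continuity method for $\bar v_\eta$ rather than $v_\eta$), the rest of your outline is essentially correct.
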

  \begin{proof} For the purposes of this proof, let $u_1(p) =u(p-\lambda\varepsilon \tau)$.

Since we have ($\textit{iii}$), we only need to show the improvement in an $\varepsilon$-neighborhood of the free boundary. Precisely, our goal is to show the improvement in $\Omega_{\varepsilon,R,T} \cap C_{R-c\varepsilon^{\bar{\alpha}}, T-c\varepsilon^{\bar{\alpha}}}$.\\

  Define
 \[
 v_\eta(p) = \sup_{B_{\sigma\phi_\eta}(p)} u(q-\lambda\varepsilon\tau).
 \]
 Choose $\bar{\eta}$ such that
 \[
 \sigma(1-\omega_0 +\bar{\eta}) = \varepsilon(\lambda\sin \delta -c\varepsilon^\beta)
 \]
 where $\sigma =\varepsilon(\sin\delta -(1-\lambda))$. Assume our defect angle $\delta$ is small and take $(1-\lambda)=\frac{1}{2}\sin\delta$. Then, since $0 <\varepsilon \ll \delta \ll 1$, we have that $1/3 <\bar{\eta}<1$ (the lower bound is the one of consequence here).  We proceed to perturb this function as follows:
 \[
 \bar{v}_\eta = v_\eta +c\varepsilon^{1+\beta -\gamma}w_\eta.
 \]

 We define $w_\eta$ as follows:
\[
 \left\lbrace
 \begin{aligned}
      \mathcal{L}w_\eta-(w_\eta)_t &=0 \quad \text{in} \quad \Omega^+(v_\eta)\cap \Omega_{\varepsilon,R,T}\\
      w_\eta &=0 \quad \text{on} \quad \fb(v_\eta)\\
      w_\eta &=u \quad \text{on rest of} \quad \partial_p[\Omega_{\varepsilon,R,T} \cap \Omega^+(v_\eta)].
 \end{aligned}
  \right.
\
\] Extend $w_\eta$ by zero on the rest of $\Omega_{\varepsilon,R,T}$.
 Note that $w_\eta\leq u$ in $\Omega^+(v_\eta) \cap \Omega_{\varepsilon,R,T}$ by the maximum principle.\\

 We will show that for every $\eta \in [0,\bar{\eta}]$ we have
 \[
 \bar{v}_\eta \leq u
 \] in $\Omega_{\varepsilon,R,T} \cap C_{R-c\varepsilon^{\bar{\alpha}}, T-c\varepsilon^{\bar{\alpha}}}$. This will be accomplished by showing that the set of $\eta$'s for which $\bar{v}_\eta \leq u$ is non-empty and both open and closed. The set being closed follows from the fact that the $\bar{v_\eta}$ vary continuously in $\eta$ (see the remark after Lemma~\ref{lm:FamilySubSlns}), so we only need to show that it is  non-empty and open.

 \paragraph*{Non-Empty:}

 We show that $\bar{v}_0 \leq u$. Now from the properties of $\phi_\eta$, $\phi_0 \equiv 1-\omega_0$, so that
 \[
 v_0 =\sup_{B_{\sigma(1-\omega_0)}} u(q-\lambda\varepsilon\tau).
 \]
 Now clearly $\sigma \geq (1-\omega_0)\sigma$. Using the fact that
 \[
 \sup_{B_\sigma(p)} u(q-\lambda\varepsilon\tau) \leq (1-c\varepsilon^{\bar{\beta}}) u(p),
 \] we see that $v_0(p) \leq (1-c\varepsilon^{1+\beta-\gamma})u(p)$ (recall that $1+\beta-\gamma =\bar{\beta}$). Now by choosing a suitable new constant $c$, we can arrange $\bar{v}_0  =v_0+c\varepsilon^{1+\beta-\gamma}w_0 \leq u$.

 \paragraph*{Open:}

 We claim that it will be enough to show that
 \begin{equation}\label{eq.inclusion}
 \Omega_{\varepsilon,R,T}\cap \{\bar{v}_\eta >0\}\subset\subset\Omega_{\varepsilon,R,T}\cap \{u >0\}
  \end{equation}
  for every $\eta \in [0,\bar{\eta})$. We argue this claim as follows:

 First, note that $u$ is fully monotone, and thus $\lambda\varepsilon$-monotone, outside of $\Omega_{\varepsilon,R,T}$ by hypothesis $(iii)$. By our choices of $\sigma$ and $\bar{\eta}$ at the beginning of this proof, we have that $\sigma\phi_\eta \leq \lambda\varepsilon \sin \delta$ for any $0\leq \eta \leq 1$. This means that outside of $\Omega_{\varepsilon,R,T}$
 \[
 v_\eta (p) \leq \sup_{B_{\lambda\varepsilon\sin \delta}} u(p-\lambda\varepsilon\tau) \leq (1-c(\lambda\varepsilon)^{\bar{\beta}})u(p)
 \]  for any $\eta \in [0,1]$.
 Using this, the assumption that $\lambda$ is close to one, and adjusting the constant $c$ that appears in the definition of $\bar{v}_\eta$, we have that $\bar{v}_\eta \leq u$ along the boundary of $\Omega_{\varepsilon,R,T}$.

  Now assume $\Omega_{\varepsilon,R,T}\cap \{\bar{v}_\eta >0\} \subset\subset \Omega_{\varepsilon,R,T}\cap \{u >0\}$. We want to show that $\bar{v}_\eta \leq u$. By the preceding argument, we have that $u \geq \bar{v}_\eta$ on $\partial_p[ \{ \bar{v}_\eta>0\}\cap\Omega_{\varepsilon,R,T}]$. Inside this domain both functions are classical solutions so the maximum principle implies that $u \geq \bar{v}_\eta$ inside as well.  An entirely similar argument, though somewhat simpler since $w_\eta$ does not appear, yields the same conclusions for $\{\bar{v}_\eta<0\}$. Hence $\bar{v}_\eta \leq u$ everywhere. \\

 We now proceed to prove \eqref{eq.inclusion} by contradiction. Assume there exists a point $(x_0,t_0) \in \fb(u)\cap \fb(v_\eta)$ for some $\eta$ which we now regard as fixed. All points on $\fb(v_\eta)$ are regular from the right and, since the $\fb(u)$ touches $\fb(v_\eta)$ at $(x_0,t_0)$, we have that this point is regular from the right for $u$.

 By the definition of $v_\eta$, we have that there is a corresponding point $(y_0,s_0) \in \partial B_{\sigma\phi_\eta(x_0,t_0)}(x_0,t_0) \cap \fb(u_1)$. This point is then a regular point from the left for $u_1$. By Lemma~\ref{lm:AD_for_u} and Lemma~\ref{lm:Asym_Bndry_Con} we have along parabolas of the form $s=s_0 -\gamma\langle y-y_0,\nu \rangle^2$
\[
 u_1(y,s) = \alpha_+\langle y-y_0,\nu\rangle^+ - \alpha_-\langle x,\nu\rangle^- +o(d(x,t))
 \] where $\nu =\frac{y_0-x_0}{|y_0-x_0|}$ with $G(\alpha_+,\alpha_-) \geq 1$.

 Next, by Lemma~\ref{lm:Prop_of_SupCon}, we have that near $(x_0,t_0)$ along the the parabola $t=t_0 -\gamma \langle x-x_0,\nu \rangle^2$ we have
%
 \[
 \begin{aligned}
v_\eta \geq &\alpha_+(x-x_0,\nu + \frac{\sigma \phi_\eta(x_0,t_0)}{|y_0-x_0|}\nabla(\sigma\phi_\eta))^+\\
&-\alpha_-(x-x_0,\nu + \frac{\sigma \phi_\eta(x_0,t_0)}{|y_0-x_0|}\nabla(\sigma\phi_\eta))^- +o(|x-x_0|).
\end{aligned}
 \]
  In other words, 
 \[
  v_\eta \geq \alpha_+|\nu^*|(x-x_0,\nu^*)^+ -\alpha_-|\nu^*|(x-x_0,\nu^*)^+ +o(|x-x_0|),
 \]
 where $\nu^* =\nu + \frac{\sigma \phi_\eta(x_0,t_0)}{|y_0-x_0|}\nabla(\sigma\phi_\eta)$.\\

We now turn to the behavior of $w_\eta$ and invoke the non-degeneracy condition we have on $u^+$. This implies a non-degeneracy condition on $u_1$, and in turn on $v_\eta$ via Lemma~\ref{lm:Prop_of_SupCon} . Precisely, by non-degeneracy, the $\alpha_+$ appearing in the asymptotic behavior of $u_1$ is strictly positive bounded away from zero (a consequence of Lemma~\ref{lm:AD_for_u}). Since the same $\alpha_+$ appears in the asymptotic behavior of $v_\eta$, $v_\eta$ also has this non-degeneracy.  By using our Boundary Harnack Principle for $\mathcal{L}$-caloric functions on Lipschitz domains with $w_\eta$ and $v_\eta$ we deduce that $w_\eta$ also possesses a non-degeneracy property, $(w_\eta)_{\nu^*} \geq c >0$ along $\fb(v_\eta) \cap C_{R-c\varepsilon^{\tilde{\alpha}}, T-c\varepsilon^{\tilde{\alpha}}}$.

 Now combining the behavior of $v_\eta$ and $w_\eta$ we have
 \[
 \bar{v}_\eta \geq \bar{\alpha}_+(x-x_0,\nu)^+ -\alpha_-(x-x_0,\nu)^- +o(|x-x_0|)
 \] with
\[
\begin{aligned}
\bar{\alpha}_+ &\geq \alpha_+(1-c\sigma^2|\nabla \phi_\eta |)+c\varepsilon^{1+\beta-\gamma}\\
&\geq \alpha_+(1-c\varepsilon^2\delta\varepsilon^{\beta-1}) +c\varepsilon^{1+\beta-\gamma}\\
&> \alpha_+
\end{aligned}
\] for $\varepsilon$ small enough. We have only used $w$ to perturb the positive part of $v_\eta$, hence the $\alpha_-$ is unchanged.

Since, as noted above, $(x_0,t_0)$ is a regular point from the right for $u$, we have by Lemma~\ref{lm:AD_for_u}
\[
 u(x,t_0) \geq \alpha_+^{(2)}(x-x_0,\nu)^+ -\alpha_-^{(2)}(x-x_0,\nu)^- +o(|x-x_0|)
\]
where $G(\alpha_+^{(2)},\alpha_-^{(2)}) \leq1$.

$G$ is strictly increasing in the first argument so
\[
G(\alpha_+^{(2)},\alpha_-^{(2)}) \leq 1 \leq G(\alpha_+,\alpha_-) < G(\bar{\alpha}_+,\alpha_-)
\]
However, $u-\bar{v}_\eta$ is a nonnegative supersolution in $\Omega^+(\bar{v}_\eta)$, and so we must have $\alpha_-^{(2)} \leq \alpha_-$. Additionally, by the Hopf Principle,  $\alpha_+^{(2)} > \bar{\alpha}_+$. But $G$ is strictly increasing in its first argument, strictly decreasing in its second and thus we arrive at a contradiction. Hence the set is open.

\paragraph*{Conclusions:}
We have proved that $\bar{v}_\eta \leq u$ for any $\eta \in [0,\bar{\eta}]$. In particular, $\bar{v}_{\bar{\eta}}  \leq u$.

Now recall from the construction of the family $\phi_\eta$ that
\[
\phi_\eta \geq 1-\omega_0+\eta(1-c\varepsilon^\beta)
\] in
\[
\bar{\Omega}_{\varepsilon,R,T} \cap \left(\{t>-T=\varepsilon^{\tilde{\alpha}} \} \cup \{ x:|x'| < R-\frac{\varepsilon^{\tilde{\alpha}/4}}{2} \}\right).
\]
Since
\[
 \sigma(1-\omega_0 +\bar{\eta}) = \varepsilon(\lambda\sin \delta -c\varepsilon^\beta)
 \]
  and
  \[
  \sigma = \frac{\varepsilon}{2}\sin \delta
  \]
  we  have $\sigma\phi_{\bar{\eta}} \geq \varepsilon(\lambda\sin \delta -c^*\varepsilon^\beta)$ in this region (here we used the bound$\bar{\eta}> 1/3$).\\

Finally, $\lambda\sin \delta -c^*\varepsilon^\beta > \lambda\sin(\delta-\bar{c}\varepsilon^\beta)$. We conclude that on
\[
\bar{\Omega}_{\varepsilon,R,T} \cap \left(\{t>-T=\varepsilon^{\tilde{\alpha}} \} \cup \{ x:|x'| < R-\frac{\varepsilon^{\tilde{\alpha}/4}}{2} \}\right)
\]
$u$ is $\lambda\varepsilon$-monotone for any direction $\tau \in \Gamma(\theta -\bar{c}\varepsilon_0^\beta,e_n)$.   \\
\end{proof}
\textbf{Remark:} Hypothesis $(i)$ is not used in the proof of the theorem, but it is needed in order to apply Lemma~\ref{ln:FullMonoAway} in the proof of the following corollary. 

\begin{cor} \label{Cor_Mono_Up_2_FB}
Let $u$ be as in Theorem 2 on $C_{1,1}$. Then on a smaller cylinder $C_{5/6,5/6}$ $u$ is fully monotone in a space-time cone of directions. As a result, we have that
\[
|u_t| \leq CD_nu
\] in this region.

\end{cor}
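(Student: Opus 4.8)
The plan is to run the iteration furnished by Theorem~2 infinitely many times, starting from the $\varepsilon_0$-monotonicity supplied by Lemma~1 (in the sup-convolution formulation of Section~5), and to show that the parameters converge to a genuine cone of full monotonicity. Concretely, set $\varepsilon_k = \lambda^k \varepsilon_0$ and let $\theta_k$ be the aperture of the cone of $\varepsilon_k$-monotonicity produced after $k$ applications of Theorem~2; by the theorem, $\theta_{k+1} \geq \theta_k - \bar c\,\varepsilon_k^{\beta} = \theta_k - \bar c\,\lambda^{k\beta}\varepsilon_0^{\beta}$, and the working cylinder shrinks from $C_{R_k,T_k}$ to $C_{R_k - c\varepsilon_k^{\bar\alpha},\,T_k - c\varepsilon_k^{\bar\alpha}}$. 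First I would check the two convergent series: since $\sum_k \lambda^{k\beta} < \infty$, the total angle lost $\sum_k \bar c\,\varepsilon_0^\beta \lambda^{k\beta}$ is finite and — by taking $\varepsilon_0$ small, equivalently rescaling so that the coefficient oscillation $\omega_0$ is tiny and the initial defect angle $\delta$ is small — can be made smaller than, say, half the initial aperture, so that $\theta_k \to \theta_\infty > 0$ with $\pi/2 - \theta_\infty$ still $\ll 1$; and since $\sum_k \lambda^{k\bar\alpha} < \infty$, the cylinders shrink only by a fixed small amount, so for the initial choice $C_{1,1}$ all iterates contain $C_{5/6,5/6}$. One must also verify that the hypotheses (i)–(iv) of Theorem~2 are reproduced at each stage: (i) spatial monotonicity persists because $e_n$ stays in every $\Gamma(\theta_k,e_n)$; (iv) is (H2), stable under the iteration since the $\alpha_+$ in the asymptotic expansion is preserved under sup-convolution (as used in the open step of Theorem~2's proof); (ii) is exactly the output of the previous step; and (iii) — monotonicity outside an $\varepsilon_k$-neighborhood of $\fb(u)$ — follows from Lemma~\ref{ln:FullMonoAway} applied to the $\varepsilon_{k}$-monotone function, which is where hypothesis (i) is genuinely needed (cf. the Remark after Theorem~2).

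Passing to the limit, for every direction $\tau$ in the interior of $\Gamma(\theta_\infty, e_n)$ and every $p, p - \varepsilon\tau$ in $C_{5/6,5/6}$, the $\varepsilon_k$-monotonicity inequality $\sup_{B_{\varepsilon_k \sin\delta_k}(p)} u(q - \varepsilon_k \tau) \leq u(p)$ holds for all $\varepsilon \geq \varepsilon_k$; letting $k \to \infty$ (so $\varepsilon_k \to 0$) and using continuity of $u$, one gets $u(p - \varepsilon\tau) \leq u(p)$ for every $\varepsilon > 0$, i.e. $u$ is fully monotone in the cone $\Gamma(\theta_\infty, e_n)$ — and the same for $-u^-$, so $u$ is fully monotone in the sense of the Definition. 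This monotonicity in a spatial-plus-temporal cone is exactly a cone condition on the graph of $u$ in $(x,t)$-space.

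The bound $|u_t| \leq C D_n u$ is then a direct geometric consequence. Full monotonicity in a cone $\Gamma(\theta_\infty, e_n)$ of space-time directions means $D_\tau u \geq 0$ for every $\tau = \beta_1 e_n + \beta_2 e_t$ with $|\tau| = 1$ and $(\beta_1,\beta_2)$ in the planar cone of half-angle $\theta_\infty' = \pi/2 - (\pi/2-\theta_\infty)$ about $e_n$; in particular both $\tau_{\pm} = \cos\theta_\infty'\, e_n \pm \sin\theta_\infty'\, e_t$ are admissible, whence $\cos\theta_\infty'\, D_n u \pm \sin\theta_\infty'\, u_t \geq 0$, i.e. $|u_t| \leq \cot\theta_\infty'\, D_n u =: C D_n u$ with $C$ depending only on the final aperture, hence only on $n,\lambda,\Lambda,L,L_0,M/m$. (For this pointwise reading one interprets the difference-quotient monotonicity in the limit, valid a.e. and in the viscosity/distributional sense, which suffices for the applications in Section~6.)

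The step I expect to be the main obstacle is not the limiting argument but bookkeeping the iteration rigorously: making sure that at each stage the rescaling needed to invoke Lemma~\ref{ln:FullMonoAway} and Lemma~\ref{lm:FamilySubSlns} in $C_1$ (small $\omega_0$, small $\delta$, validity of the sup-convolution construction) is compatible with the fixed ambient cylinder $C_{1,1}$ and does not degrade the constants at a rate outpacing the geometric decay $\lambda^k$ — in particular that the constant $\bar c$ in the angle loss and the exponent $\bar\alpha$ can be chosen uniformly in $k$, so that the two series above really do converge. Once the parameters are pinned down as stated (with $\gamma = (1-\delta)/2$, $\beta < \min\{(1-\delta)/2,(\alpha+\delta-1)/2\}$, and $\bar\beta = 1-\gamma+\beta$ as in Section~2), this is routine but must be done with care.
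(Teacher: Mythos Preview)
Your approach is essentially the same as the paper's: iterate Theorem~2, sum the geometric series controlling the angle loss and the cylinder shrinkage, and pass to the limit. There is, however, one bookkeeping point the paper handles explicitly that you only gesture at. You write $\theta_{k+1}\ge \theta_k-\bar c\,\varepsilon_k^{\beta}$, i.e.\ you track only the loss coming from the conclusion of Theorem~2. But to reestablish hypothesis~(iii) at step $k+1$ you invoke Lemma~\ref{ln:FullMonoAway}, and that lemma does \emph{not} return full monotonicity in the original direction $\tau$: it returns full monotonicity in the perturbed direction $\tau_{\varepsilon}=\tau+c(M\varepsilon_k)^{(\alpha+\delta-1)/2}e_n$. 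So each application of Lemma~\ref{ln:FullMonoAway} costs an additional slice $c(M\varepsilon_k)^{(\alpha+\delta-1)/2}$ of the cone opening. The paper therefore records the limiting aperture as
\[
\theta^t \;=\; \theta_*^t \;-\; \bar c\,\varepsilon_0^{\beta}\sum_{k}\lambda^{\beta k}\;-\;c(M\varepsilon_0)^{(\alpha+\delta-1)/2}\sum_{k}\lambda^{k(\alpha+\delta-1)/2},
\]
and checks that \emph{both} geometric series converge and can be made small by choosing $\varepsilon_0$ small. Your last paragraph signals awareness that ``the constant $\bar c$ in the angle loss\ldots can be chosen uniformly in $k$,'' but the missing second series is not a uniformity issue --- it is a second, distinct contribution to the loss that your recursion for $\theta_k$ omits. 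Once you insert it, nothing else changes (indeed $\beta<(\alpha+\delta-1)/2$ by the choice of parameters in Section~3, so the first series already dominates), and your derivation of $|u_t|\le C\,D_n u$ from the limiting cone is fine and more explicit than the paper's.
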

\begin{proof}
From Theorem 2 we can conclude that $u$ is $\lambda\varepsilon$-monotone in the directions $\Gamma(\theta-\bar{c}\varepsilon^\beta,e_n)$. From Lemma~\ref{ln:FullMonoAway}, $(M\lambda\varepsilon_0)^\gamma$ away from the free boundary $u$ is fully monotone in the directions $\tau'= \tau +c(M\lambda\varepsilon_0)^{\alpha+\delta-1}e_n$. So we have that $u$ is fully monotone $(M\lambda\varepsilon)^\gamma$ away from the free boundary in the cone of directions $\Gamma(\theta-\bar{c}\varepsilon^\beta-(M\varepsilon)^{(\alpha+\delta-1)/2},e_n)$.\\
We iterate this. We can achieve full monotonicity of $u$ up to the free boundary on the smaller cylinder provided we can control the cone loss at every step. This amounts to
\[
\theta^t = \theta_*^t - \bar{c}\varepsilon_0^\beta \sum_k \lambda^{\beta k} -(M\varepsilon_0)^{(\alpha+\delta-1)/2} \sum_k c\lambda^{(\alpha+\delta-1)/2}>0
\]
and
\[
\varepsilon_0^{\tilde{\alpha}}\sum \lambda^{k\tilde{\alpha}} <\frac{1}{6} .
\] We observe that this last term controls how far we stay away from the sides of the original cylinder.

 All the sums involved are convergent geometric series, so provided that $\varepsilon_0$ is small enough we can achieve both conditions.
 \end{proof}

 \section{Lipschitz Continuity of Solution}

The existence of a space-time monotonicity cone up to the free boundary established in Corollary~\ref{Cor_Mono_Up_2_FB} allows us to prove Theorem 1.

 \begin{proof}[\textbf{Proof of Theorem 1}]
  We first note that Lemmas 5.1 and 5.2 from \cite{FS1} continue to hold in our case.

 By Corollary~\ref{Cor_Mono_Up_2_FB}
 \[
 |u_t|\leq C|\nabla u|,
 \]
and thus it suffices to show only the boundedness of the spacial gradient across the free boundary.

Let $(x_0,t_0) \in \Omega^+(u)\cap C_{1/2}$ be a point such that $\mathrm{dist}((x_0,t_0), FB(u)) \leq d_0$. Then the $(n+1)$-dimensional ball $B_d(x_0,t_0)$ touches the free boundary at some point $(\bar{x},\bar{t})$, which we will assume to be the origin for simplicity. We then let $h =\mathrm{dist}((x_0,0),(0,0))$. Since the free boundary is Lipschitz, we have that there exists a $c$ (not dependent on $(x_0,t_0)$) such that $cd\leq h \leq d$.

Now set $A=\frac{u(x_0,0)}{h}$. We aim to prove that $A$ is bounded independent of the point $(x_0,t_0)$. Having thus set up the parameters of the proof, we proceed as in \cite{FS1}, and we refer the reader to that source for more details. Seeking contradiction we construct a function  $z$ in a small neighborhood of the origin $Q_s$ which is a subsolution to the equation $\mathcal{L}-\partial_t$. In addition, using that at the origin the spacial normal (by construction) is $e_n$, one can show that $z$ satisfies the properties
\[
z^+_\nu =z^+_n \geq cA \quad \text{and} \quad z^-_\nu =z^-_n \leq\frac{c_1}{As^2},
\]

and

\[
\frac{z^+_t(0,0)}{z^+_n(0,0)}=\beta. 
\]
The last condition essentially describes the fact  that $\beta$ is the speed of the free boundary for $z$. We will show that $A$ too large forces a contradiction.

 We require that $\beta$ satisfy
\[
\frac{1}{3}\lambda\tilde{C}A <\beta <\lambda\tilde{C}A.
\]
 Note that $A$ large implies that $\beta$ is large.

 We recall that $G(u^+_\nu,u^-_\nu)$ is increasing in the first argument and decreasing in the second. So from the estimates on $z$ at the origin above we have
  \[
  G(z^+_n,z^-_n) \geq G(cA,\frac{c_1}{As^2})
  \]

  By increasing $A$ we can make the right hand side as large as we desire. In particular, we can make $G(cA,\frac{c_1}{As^2})>1$. By the continuity of $z$ we obtain that $z$ is a strict subsolution to our problem in a neighborhood of the origin. We can now adjust the speed $\beta$ to be fast enough (possibly increasing $A$ further if need be) so that the free boundary of $z$ stays to the right (i.e. in the positivity set of) the free boundary $u$. This is possible since the speed of $u$ is finite, owing to $u$ having Lipschitz free boundary. By construction $z <u$ on the parabolic boundary of this neighborhood so the same should hold inside by virtue of $u$ being a viscosity solution. But $u(0,0)=z(0,0)$, a contradiction.
 \end{proof}
  

\begin{thebibliography}{99}



\bibitem[ACS1]{ACS1} I. Athanasopoulos, L. A. Caffarelli, and S. Salsa, \emph{Caloric functions in Lipschitz domains and the regularity
of solutions to phase transition problems}, Ann. Math., \textbf{143} (3), 1996, 413-434.

\bibitem[ACS2]{ACS2} I. Athanasopoulos, L. A. Caffarelli, and S. Salsa, \emph{Regularity of the free boundary in parabolic phase transition
problems}, Acta Math., \textbf{176} (2), 1996, 243-282.

\bibitem[ACS3]{ACS3} I. Athanasopoulos, L. A. Caffarelli, and S. Salsa, \emph{Phase transition problems of parabolic type: Flat free
boundaries are smooth}, Comm. Pure Appl. Math., \textbf{51}, 1998, 77-112.

\bibitem[C1]{C1} L. A. Caffarelli, \emph{A Harnack inequality approach to the regularity of free boundaries. Part I: Lipschitz free
boundaries are $C^{1,\alpha}$}, Rev. Matem. Iberoamericana, \textbf{3} (2), 1987, 139-162.

\bibitem[C2]{C2} L. A. Caffarelli, \emph{A Harnack inequality approach to the regularity of free boundaries. Part II: Flat free boundaries
are Lipschitz}, Comm. Pure Appl. Math., \textbf{42}, 1989, 55-78.

\bibitem[CLW1]{CLW1}  L. A. Caffarelli, C. Lederman, N. Wolanski,  \emph{Uniform estimates and limits for a two phase parabolic singular
 perturbation problem},
 Indiana Univ. Math. J.  \textbf{46}  (1997),  no. 2, 453--489.

\bibitem[CLW2]{CLW2}  L. A. Caffarelli, C. Lederman, N. Wolanski,  \emph{Pointwise and viscosity solutions for the limit of a two phase
 parabolic singular perturbation problem},
 Indiana Univ. Math. J.  \textbf{46}  (1997),  no. 3, 719--740.

\bibitem[CS]{CS} L. A. Caffarelli,   S. Salsa,  \emph{A geometric approach to free boundary problems},
Graduate Studies in Mathematics, 68. American Mathematical Society, Providence, RI,  2005. x+270 pp. ISBN: 0-8218-3784-2

\bibitem[CV]{CV} L. A. Caffarelli,  J. L. V\'{a}zquez,  \emph{A free-boundary problem for the heat equation arising in flame
 propagation},
 Trans. Amer. Math. Soc.  \textbf{347}  (1995),  no. 2, 411--441.

\bibitem[FS1]{FS1} F. Ferrari,  S. Salsa, \emph{Regularity of the solutions for parabolic two-phase free boundary
 problems},
 Comm. Partial Differential Equations  \textbf{35}  (2010),  no. 6, 1095--1129.

 \bibitem[FS2]{FS2}F. Ferrari,  S. Salsa.,  \emph{Two-phase free boundary problems for parabolic operators: smoothness of
 the front},
 Comm. Pure Appl. Math.  \textbf{67}  (2014),  no. 1, 1--39.

\bibitem[F]{F} L. Fornari,  \emph{Regularity of the solution and of the free boundary for free boundary
 problems arising in combustion theory},
 Ann. Mat. Pura Appl. \textbf{176}  (4)   (1999), 273--286.





  \end{thebibliography}
\end{document}